\documentclass[a4paper,reqno,12pt]{amsart}
\usepackage[T1]{fontenc}
\usepackage[utf8]{inputenc}
\usepackage[english]{babel}

\usepackage{amsmath}
\usepackage{amsthm}
\usepackage{amssymb}
\usepackage{mathtools}
\usepackage{amsmath}
\usepackage{amsfonts}
\usepackage{mathrsfs}
\usepackage{esint}
\usepackage{yfonts}
\usepackage{quoting}
\usepackage{graphics}
\usepackage{booktabs}
\usepackage{bm}
\usepackage{bbm}
\usepackage{lmodern}
\usepackage{stmaryrd}
\usepackage{caption}
\usepackage{enumitem}			% Per elenchi numerati
\usepackage{tikz}
\usepackage{bookmark}
\usepackage{pgfplots}
\usepgfplotslibrary{fillbetween}
\usepackage[mathscr]{euscript}
\usepackage{orcidlink}
\usepackage[cal=esstix]{mathalpha}

\usepackage{ulem}

\usepackage{hyperref}
\hypersetup{
	colorlinks=true,
	linkcolor=blue,
	citecolor=blue,
	urlcolor=black,
	linktoc=all
}

\usepackage[margin=1.1in]{geometry}

\quotingsetup{font=small}

\theoremstyle{plain}
\newtheorem{proposition}{Proposition}[section]

\theoremstyle{plain}
\newtheorem{theorem}{Theorem}[section]
\numberwithin{equation}{section}	% Numerazione equazioni relativa alla sezione

\theoremstyle{plain}
\newtheorem{lemma}[theorem]{Lemma}

\theoremstyle{plain}
\newtheorem{corollary}{Corollary}[theorem]

\theoremstyle{definition}
\newtheorem{remark}{Remark}[section]

\theoremstyle{definition}
\newtheorem{example}{Example}[section]

\DeclarePairedDelimiter{\abs}{\lvert}{\rvert}		%	Valore assoluto
\newcommand{\numberset}{\mathbb}
\newcommand{\N}{\numberset{N}}			%	Naturali
\newcommand{\R}{\numberset{R}}			%	Reali
\newcommand{\sfera}{\numberset{S}}		%	Sfera
\newcommand{\Vg}{V_{\! g}}

\def\Xint#1{\mathchoice
	{\XXint\displaystyle\textstyle{#1}}%
	{\XXint\textstyle\scriptstyle{#1}}%
	{\XXint\scriptstyle\scriptscriptstyle{#1}}%
	{\XXint\scriptscriptstyle\scriptscriptstyle{#1}}%
	\!\int}
\def\XXint#1#2#3{{\setbox0=\hbox{$#1{#2#3}{\int}$ }
		\vcenter{\hbox{$#2#3$ }}\kern-.6\wd0}}

\def\dashint{\Xint-}

\usepackage{enumitem}

\allowdisplaybreaks

\begin{document}
	
	\title[The Liouville equation on Riemannian surfaces]{The Liouville equation on Riemannian surfaces: \\ the role of volume growth \\ in classification and rigidity results}
	
	\author[Giulio Ciraolo]{Giulio Ciraolo \orcidlink{0000-0002-9308-0147}}
	\address[]{Giulio Ciraolo. Dipartimento di Matematica ‘Federigo Enriques’, Università degli Studi di Milano, Via Cesare Saldini 50, 20133, Milan, Italy}
	\email{giulio.ciraolo@unimi.it}
	
	\author[Alberto Farina]{Alberto Farina \orcidlink{0009-0003-6455-9161}}
	\address[]{Alberto Farina. LAMFA, CNRS UMR 7352, Université de Picardie Jules Verne, Rue Saint Leu 33, 80039, Amiens, France}
	\email{alberto.farina@u-picardie.fr}
	
	\author[Michele Gatti]{Michele Gatti \orcidlink{0009-0002-6686-9684}}
	\address[]{Michele Gatti. Dipartimento di Matematica ‘Federigo Enriques’, Università degli Studi di Milano, Via Cesare Saldini 50, 20133, Milan, Italy}
	\email{michele.gatti1@unimi.it}
	
	\subjclass[2020]{Primary 35J91, 58J05, 53C24; Secondary 35R01, 53C21}
	\date{\today}
	\keywords{Liouville equation, classification results, rigidity for manifolds}
	
	\begin{abstract}
		We study the Liouville equation~$-\Delta u = e^u$ on a complete, connected, non-compact, boundaryless  Riemannian surface $(M, g)$ with non-negative Ricci curvature. Assuming only some asymptotic lower bound on the solution, we establish classification results for both the solutions and the ambient manifold, discussing also their optimality. Our results reveal a close connection between the volume growth of the manifold and the classification of both the solutions and the underlying manifold.
	\end{abstract}
	
	\maketitle
	
	% -------------------------------------------------------------------
	% INTRO
	
	\section{Introduction}
	\label{sec:intro}
	
	In a two-dimensional Riemannian manifold~$(M,g)$, we consider the Liouville equation
	\begin{equation}
		\label{eq:Liou}
		-\Delta u = e^u \quad\text{in } M.
	\end{equation}
	This equation arises naturally in geometric analysis in the context of conformal changes of metrics on two-dimensional manifolds. More specifically, it describes the transformation of a metric with vanishing Gaussian curvature into one with positive constant Gaussian curvature. Moreover, the study of~\eqref{eq:Liou} is also motivated by several physical applications -- see, for instance,~\cite{berchio,CaiLaiReview} and reference therein for more details.
	
	In the Euclidean case, Liouville~\cite{liou} proved that any solution to~\eqref{eq:Liou} can be represented in terms of a meromorphic function. More than a century later, Chen \& Li~\cite{cl} classified the entire solutions of~\eqref{eq:Liou} in~$\R^2$ under the finite-mass assumption
	\begin{equation}
		\label{eq:fmass}
		\int_{\R^2} e^u \, dx < +\infty.
	\end{equation}
	In particular, they proved that every entire solution enjoying~\eqref{eq:fmass} is radially symmetric with respect to some point and is given by one of the \textit{bubbles} 
	\begin{equation}
		\label{eq:bub}
		U[z,\lambda](x) \coloneqq \log\frac{8\lambda^2}{\left(1+\lambda^2\,\abs*{x-z}^2\right)^{2}}.
	\end{equation}
	Here,~$\lambda>0$ is the scaling parameter and~$z \in \R^2$ is the center of the bubble. 
	
	In~\cite{cl}, the finite-mass assumption~\eqref{eq:fmass} is used to derive the optimal asymptotic behavior of solutions at infinity, which in turn enables the application of the moving planes method to obtain the classification result. It is also worth mentioning that the classification result in~\cite{cl} was subsequently recovered by Chou \& Wan~\cite{CW1994} and Hang \& Wang~\cite{hw} using complex analysis techniques. Moreover, Chanillo \& Kiessling~\cite{CK1995} provided an alternative proof based on a combination of a Pohozaev-type identity and the isoperimetric inequality. See also the survey~\cite{CaiLaiReview} for an overview on this topic.
	
	More recently, still within the Euclidean framework, Eremenko, Gui, Li, \& Xu~\cite{emer} used methods from complex analysis to completely classify the entire solutions of~\eqref{eq:Liou} that are bounded from above.

	In the Riemannian setting, Catino \& Monticelli~\cite{cm} considered complete non-compact surfaces with non-negative Gaussian curvature, assuming both the finiteness of the mass, namely 
	\begin{equation}
	\label{eq:finite-mass-manifold}
		\int_{M} e^u \, d\Vg < + \infty, 
	\end{equation}	
	and the lower bound
	\begin{equation}
	\label{eq:asymp_catino}
		u(x) \geq -4\log r(x) - 2\gamma\log\log r(x)  \quad\text{as } r(x) \to +\infty,
	\end{equation}	
	for some~$\gamma \in [0,1)$, where~$r$ denotes the geodesic distance from a reference point in the manifold. Under these assumptions, they proved that~$(M,g)$ is isometric to the Euclidean plane endowed with its standard metric and, consequently, that the solution must be a bubble of the form~\eqref{eq:bub}.
	
	This result was later improved by Cai \& Lai~\cite{cai-lai}, who replaced~\eqref{eq:asymp_catino} with
	\begin{equation*}
		u(x) \geq -4\log r(x) + o(\log r(x)) \quad\text{as } r(x) \to +\infty,
	\end{equation*}	
	while still assuming the finite-mass condition~\eqref{eq:finite-mass-manifold}.
	
	In~\cite{cami}, Ciraolo, Farina \& Polvara established the first classification result without assuming the finite-mass condition. Exploiting a~$P$-\textit{function approach}, they studied the Liouville equation~\eqref{eq:Liou} on two-dimensional, complete non-compact Riemannian manifolds with non-negative Ricci curvature, assuming only the lower bound
	\begin{equation}
	\label{eq:asymp_CFP}
		u(x) \geq -4\log r(x) - \log F(r(x)) \quad\text{as } r(x) \to +\infty,
	\end{equation}
	where~$F$ is a function satisfying 
	\begin{equation*}
		\int_c^{+\infty} \frac{dt}{t F(t)} = + \infty \quad\text{for some } c \geq 1.
	\end{equation*}
	Finally, by exploiting the~$P$-function approach and the analysis in~\cite{cami}, Ou~\cite{ou-liou} obtained the classification result under the lower bound
	\begin{equation}
	\label{eq:asymp_Ou}
		u(x) \geq - 4 \log r(x) - 4\alpha \log F(r(x)) \quad\text{as } r(x) \to +\infty,
	\end{equation}
	for some~$\alpha \geq 0$.
	
	In the present manuscript, we further develop the approach introduced in~\cite{cami} to optimally improve upon the main results of~\cite{cami,ou-liou} concerning~\eqref{eq:Liou}, as we describe in the following subsection. Our approach will establish a clear connection between the volume growth of the manifold~$M$, the classification of solutions to~\eqref{eq:Liou}, and the rigidity on the ambient manifold~$M$.
	
	% ----------------------------------
	% MAIN RES
	
	\subsection{Main results}
	
	In order to present our results clearly, we first fix some notation.
	Let~$(M, g)$ be a two-dimensional complete smooth Riemannian manifold with~$\mathrm{Ric}\geq 0$, and fix an origin~$\mathcal{o} \in M$. We denote by~$B_R \coloneqq B_R(\mathcal{o})$ the geodesic ball centered at~$\mathcal{o}$ with radius~$R>0$ and by~$V(R) \coloneqq \Vg(B_R)$ its volume. Moreover, we denote by~$r(x) \coloneqq \mathrm{dist}(x,\mathcal{o})$ the geodesic distance from~$\mathcal{o}$. For~$R>0$, we also set
	\begin{equation}
	\label{eq:defq}
		q(R) \coloneqq \frac{R^2}{V(R)},
	\end{equation}
	which is non-decreasing by the Bishop-Gromov volume comparison theorem. In addition, we have
	\begin{equation}
	\label{eq:vg}
	\frac{V(2R)}{V(R)}	\leq 4 \quad\text{and}\quad V(R) \leq \pi R^2.
	\end{equation}
	
	With this notation in place, the main result of this manuscript reads as follows.
 	
 	\begin{theorem}
 	\label{th:main}
 		Let~$(M, g)$ be a complete, connected, non-compact, boundaryless Riemannian manifold of dimension~$n = 2$ with~$\mathrm{Ric} \geq 0$. Let~$u \in C^2(M)$ be a solution to~\eqref{eq:Liou} and assume that there exist~$\beta \in (0,1)$ and~$F \colon [0,+\infty) \to (0,+\infty)$ non-decreasing and such that
 		\begin{equation}
 		\label{eq:hF}
 			\int_{c}^{+\infty} \frac{dt}{tF(t)} = +\infty \quad\text{for some } c \geq 1.
 		\end{equation}
 		Moreover, suppose that
 		\begin{equation}
 		\label{eq:bbu}
 			u(x) \geq - 4 \log r(x) - q^\beta(r(x)) F^\beta(r(x)) \quad\text{for } r(x) \geq c.
 		\end{equation}
 		Then,~$(M,g)$ is isometric to~$(\R^2,g_{\R^2})$ and~$u$ is a bubble of the form~\eqref{eq:bub}.
 	\end{theorem}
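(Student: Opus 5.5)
We follow the $P$-function approach of~\cite{cami}. Set $v \coloneqq e^{-u/2}$, so that~\eqref{eq:Liou} becomes
\[
v\,\Delta v = |\nabla v|^2 + \tfrac12 .
\]
For this $v$ we consider the traceless Hessian $\mathring{\nabla}^2 v = \nabla^2 v - \frac{\Delta v}{2}\, g$. The Bochner formula in dimension two, together with $\mathrm{Ric}\ge 0$, gives a differential inequality of the form
\[
\mathrm{div}\bigl(v^{-1} X\bigr) \;\ge\; v^{-1}\,\bigl|\mathring{\nabla}^2 v\bigr|^2 + (\text{curvature term}) \;\ge\; 0,
\]
where $X$ is the vector field $\nabla|\nabla v|^2 - \Delta v\,\nabla v$ (up to the precise weight used in~\cite{cami}). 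Rigidity then follows once we show that the integral of the right-hand side vanishes. Indeed, $\mathring{\nabla}^2 v\equiv 0$ and $\mathrm{Ric}(\nabla v,\nabla v)\equiv0$ force $v$ to be a quadratic function with $\nabla^2 v = c\,g$, $c>0$. The classical Obata/Tashiro-type argument for $\nabla^2 v = c g$ on a complete manifold then yields isometry with $\R^2$ and $v = a + c|x-z|^2/2$, and the relation $v\Delta v = |\nabla v|^2+\frac12$ fixes the constants so that $u$ is a bubble~\eqref{eq:bub}.

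To prove the vanishing we test the divergence inequality against $\varphi^2$, where $\varphi$ is a cutoff supported in $B_{R}$. Cauchy--Schwarz absorbs part of the boundary term into the left side, leaving
\[
\int_{B_{R/2}} v^{-1}\bigl|\mathring{\nabla}^2 v\bigr|^2 \,\varphi^2 \;\lesssim\; \int_{B_R\setminus B_{R/2}} v^{-1}|\nabla v|^2\,|\nabla\varphi|^2 ,
\]
or the analogous expression with the weight coming from~\cite{cami}. We need this right-hand side to tend to zero along a sequence $R_k\to\infty$. The standard cutoff only gives boundedness, so we use a logarithmic cutoff adapted to the function $F$. Concretely, $\varphi$ is built from the primitive $\int^{r} dt/(tF(t))$, whose divergence by~\eqref{eq:hF} kills the term. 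This is where both the lower bound~\eqref{eq:bbu} and the volume growth enter. First we need a gradient estimate
\[
|\nabla v|^2 \lesssim v\,\Delta v
\]
and an a priori control such as $|\nabla v|^2/v \lesssim 1 + v/r^2$, obtained from a local Cheng--Yau type estimate on balls of radius $r/2$. The bound~\eqref{eq:bbu} gives $v \le r^2 e^{q^\beta F^\beta/2}$, which is far too large to use pointwise. Instead we bound the integrals of $v^{-1}|\nabla v|^2$ over annuli via the equation itself: integrating $\Delta v = (|\nabla v|^2+\frac12)/v$ against cutoffs expresses annular integrals through boundary integrals and $V(R)$. The quantity $q(R)=R^2/V(R)$ appears precisely when converting pointwise bounds of size $R^{-2}$ into integrals over volume $V(R)$.

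The main obstacle is this annular estimate. We must show that
\[
\int_{B_{2R}\setminus B_R} v^{-1}|\nabla v|^2 \lesssim \frac{V(R)}{R^2}\,\bigl(\text{something controlled by } qF\bigr),
\]
and the exponent $\beta<1$ is what should allow it. My first attempt would be a Moser-type iteration, or a comparison with the log-weighted mean $\frac{1}{V(R)}\int_{B_{2R}\setminus B_R} \log v$. Using~\eqref{eq:bbu}, this mean is at most $\log R^2 + \frac12 q^\beta F^\beta$. Combined with the weak Harnack inequality for the superharmonic $u$ (valid since $-\Delta u>0$) under the doubling property~\eqref{eq:vg}, this should upgrade to a pointwise bound of the form $\log v \le C\log r + o(qF)$ on most of each annulus. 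The sublinear power $\beta$ then makes the exponential loss subdominant relative to $q F$. This yields an annular bound of order $1/(F(R))$, summable against the logarithmic cutoff thanks to~\eqref{eq:hF}. Carrying out this step carefully, in particular tracking $q$ through Harnack constants that depend only on the doubling constant $4$, is where I expect most of the work.
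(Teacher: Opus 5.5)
\textbf{There is a genuine gap, in the vanishing step.} Your closing rigidity step, from $\mathring{\nabla}^2 v\equiv 0$ to $\R^2$ and a bubble, is fine; the paper imports it from the earlier $P$-function work once $P$ is known to be constant. The gap is structural, not a matter of tracking constants.

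\emph{Why the vanishing step fails.} Every quantity your cutoff argument must control is linear in $v$. Testing $\Delta v=P$ with $\eta^2$ gives $\int_{B_R}P\,d\Vg\le \frac{4}{R^2}\int_{B_{2R}\setminus B_R}v\,d\Vg$. The annular integrals of $v^{-1}|\nabla v|^2$, and your Cheng--Yau bound $|\nabla v|^2/v\lesssim 1+v/r^2$, are controlled the same way. The only information in \eqref{eq:bbu} is $v\le r^2e^{\frac12 q^\beta F^\beta}$. So these integrals are only $O\bigl(V(2R)\,e^{\frac12 q^\beta(2R)F^\beta(2R)}\bigr)$. Your logarithmic cutoff built from $\int^r dt/(tF(t))$ needs them to be $O(R^2F(R))$.

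Already on $\R^2$ with $F(t)=\log t$, the a priori bound is $v\le r^2e^{c(\log r)^\beta}$, and $e^{c(\log R)^\beta}$ is not $O(\log R)$.

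The weak Harnack step cannot repair this. Since $\log v=-u/2$ is subharmonic, Harnack-type tools only bound $\log v$ from above by its averages, and \eqref{eq:bbu} already does that pointwise. A bound $\log v\le C\log r+o(qF)$ is no improvement on the hypothesis. Your claim that $\beta<1$ makes the exponential loss subdominant relative to $qF$ conflates $\log v$ with $v$: $(qF)^\beta=o(qF)$, but $e^{(qF)^\beta}$ is not $O(qF)$, let alone $O(F)$.

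\emph{The missing idea: work with $\log P$.} By Lemma~\ref{lem:DP-Dln}, $\log P$ (not just $P$) is subharmonic, so the exponential size of $v$ can be absorbed by a logarithm. The paper's argument runs as follows.
\begin{enumerate}
\item The linear estimate is used only once, to get $\dashint_{B_R}P\,d\Vg\le 64\,e^{q^\beta(2R)F^\beta(2R)}$.
\item Jensen's inequality for the concave increasing function $t\mapsto\log^\theta(t+c_\theta)$, with $\theta=1/\beta$, turns this into $\dashint_{B_R}(\log P)_+^{1/\beta}\,d\Vg\lesssim 1+q(2R)F(2R)$.
\item Since $V(R)\,q(2R)=4R^2V(R)/V(2R)\le 4R^2$, this gives $\int_{B_R}(\log P)_+^{1/\beta}\,d\Vg\lesssim R^2F(2R)$.
\item The Karp-type Liouville theorem (Lemma~\ref{lem:karp}) then makes $(\log P)_+$ constant. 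It requires the exponent $\theta=1/\beta>1$.
\item If that constant is $0$, then $0<P\le 1$ is a bounded subharmonic function on a parabolic surface (quadratic volume growth), hence constant.
\end{enumerate}
This is exactly where $\beta<1$ and the volume ratio $q$ enter. In your scheme, which only works with linear integral bounds on $P$ or $v^{-1}|\nabla v|^2$, neither of them yields the estimate you need.
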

 	
 	The novelty of Theorem~\ref{th:main} is twofold. First, in contrast to the asymptotic lower bounds~\eqref{eq:asymp_CFP}--\eqref{eq:asymp_Ou} used in~\cite{cami,ou-liou}, respectively, our result allows for a power-like behavior of the remainder term in~\eqref{eq:bbu}. Second, the lower bound~\eqref{eq:bbu} explicitly involves the volume ratio~$q$, defined in~\eqref{eq:defq}. Consequently, Theorem~\ref{th:main} establishes a clear connection between the volume growth of the manifold and the classification of both solutions to~\eqref{eq:Liou} and the ambient manifold itself.
 	
 	We also establish the optimality of Theorem~\ref{th:main} on every two-dimensional smooth model manifolds. More specifically, we prove that~$\beta=1$ is not allowed in Theorem~\ref{th:main}.
 	
 	\begin{corollary}
 		\label{cor:c3}
		For every two-dimensional smooth model manifold $(M,g)$ satisfying~$\mathrm{Ric} \geq 0$ and~$\mathrm{Ric} \not \equiv 0$, where~$g = dr^2+ \psi^2(r)\, d\theta^2$ and~$\psi \in C^\infty([0,+\infty))$, there exist~$\gamma>0$, depending only on~$\psi$, and a smooth radial solution to~\eqref{eq:Liou} such that
 		\begin{equation*}
 			u(r) \geq - 4 \log r - \gamma q(r) \log r \quad\text{as } r \to +\infty.
 		\end{equation*}
 		As a consequence,~$\beta=1$ is not allowed in Theorem~\ref{th:main}.
 	\end{corollary}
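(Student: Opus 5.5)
The plan is to build the solution explicitly as a radial solution of an ODE. Since $\mathrm{Ric}\ge 0$ means the Gaussian curvature $K=-\psi''/\psi$ is non-negative, we will use that $\psi$ is concave with $\psi(0)=0$ and $\psi'(0)=1$. Hence $\psi'$ is non-increasing and $\psi'\le 1$. Moreover $\psi'\ge 0$: otherwise concavity would force $\psi$ to vanish at some finite $r$, contradicting positivity of $\psi$ on $(0,+\infty)$. So $\psi$ is non-decreasing and $\psi(s)/s$ is non-increasing.

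For radial $u$, equation~\eqref{eq:Liou} reads $(\psi u')'=-\psi e^u$. Fix any $a\in\R$ and consider the initial value problem $u(0)=a$, $u'(0)=0$. Standard arguments for radial ODEs with this singular coefficient give a local smooth solution near the origin, which defines a smooth function on $M$. Integrating the equation gives
\begin{equation*}
	\psi(r)u'(r)=-m(r), \qquad m(r)\coloneqq\int_0^r\psi e^u\,ds .
\end{equation*}
Therefore $u'\le 0$, so $u$ is non-increasing and $e^u\le e^a$. The a priori bound on $u'$ obtained in the next step then excludes blow-up, so the solution extends to all of $[0,+\infty)$.

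The key step is a Pohozaev-type mass bound. Multiplying the equation by $\psi u'$ gives
\begin{equation*}
	\frac{d}{ds}\tfrac12(\psi u')^2=-\psi^2 (e^u)' .
\end{equation*}
We integrate by parts, using $\psi(0)=0$, $\psi'\le 1$ and $e^u>0$, to get
\begin{equation*}
	\tfrac12 m(r)^2=\tfrac12(\psi u')^2(r)=-\psi^2(r)e^{u(r)}+\int_0^r 2\psi\psi' e^u\,ds\le 2m(r).
\end{equation*}
Hence $m\le 4$ and $-u'(s)\le 4/\psi(s)$.

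It remains to integrate this bound for $r\ge1$. Since $\psi(s)/s$ is non-increasing, $\psi(s)\ge s\,\psi(r)/r$ for $s\le r$, so
\begin{equation*}
	\int_1^r\frac{ds}{\psi(s)}\le\frac{r}{\psi(r)}\log r .
\end{equation*}
Because $\psi$ is non-decreasing, $V(r)=2\pi\int_0^r\psi\le 2\pi r\psi(r)$, which gives $r/\psi(r)\le 2\pi q(r)$. Combining these,
\begin{equation*}
	u(r)\ge u(1)-8\pi q(r)\log r \qquad\text{for } r\ge1,
\end{equation*}
and in particular $u(r)\ge -4\log r-\gamma q(r)\log r$ for large $r$ with $\gamma=8\pi+1$. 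Indeed, $q\ge 1/\pi$ by~\eqref{eq:vg}, so the extra $q\log r$ absorbs the constant $u(1)$ once $r$ is large. The constant in the conclusion is taken uniform here, while how large $r$ must be depends on $\psi$ (through $u(1)$).

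For the final claim, take $F(t)=\gamma\log t$ for $t\ge c$ with a suitable $c>1$, extended to a positive non-decreasing function on $[0,+\infty)$. Then $\int_c^{+\infty}dt/(tF(t))=+\infty$, and the bound just proved is exactly~\eqref{eq:bbu} with $\beta=1$. However, $\mathrm{Ric}\not\equiv0$, so $(M,g)$ is not isometric to $(\R^2,g_{\R^2})$, and the conclusion of Theorem~\ref{th:main} fails for $\beta=1$.

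I expect the main obstacle to be the uniform bound $m\le 4$, which is what makes the gradient estimate $-u'\le 4/\psi$ independent of $a$. The Pohozaev identity above, which uses exactly $\psi'\le 1$, should handle it. The remaining concern is the routine regularity and global existence at the origin.
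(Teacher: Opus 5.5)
Your proof is correct, but it reaches the key estimate by a different route from the paper's. The paper first invokes Proposition~\ref{prop:ling}. There, finiteness of $I_\infty=\int_0^{+\infty}\psi e^u\,ds$ is obtained by contradiction from the linear growth of $\psi$: infinite mass would force $\psi u'\le -10c_0$, hence $e^u\lesssim r^{-10}$. The divergence of $\int_1^{+\infty}ds/\psi$ then yields the two-sided asymptotics $u(r)\sim -I_\infty\int_1^r ds/\psi(s)$. Only afterwards does the paper apply point~\eqref{it:l2} of Lemma~\ref{lem:q-psi} with $t_0=1$, so its $\gamma$ is essentially $2\pi I_\infty$ plus a margin and depends on the chosen solution.

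You replace the first two steps by the Pohozaev identity $\tfrac12 m(r)^2=-\psi^2(r)e^{u(r)}+2\int_0^r\psi\psi' e^u\,ds$. With $\psi'\le 1$ this gives the solution-independent bound $m\le 4$, i.e.\ $\int_M e^u\,d\Vg\le 8\pi$, attained in the limit by the flat bubbles. You then simply integrate $-u'\le 4/\psi$. Your last step is the same inequality as point~\eqref{it:l2} of Lemma~\ref{lem:q-psi}, but derived more directly from $\int_0^r\psi\le r\psi(r)$ rather than the Jensen/ratio argument.

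What your route buys:
\begin{itemize}
\item a non-asymptotic bound $u(r)\ge u(1)-8\pi q(r)\log r$ for all $r\ge1$;
\item validity for every radial solution;
\item a universal constant, with $\gamma=8\pi$ already sufficient since $-4\log r$ absorbs $u(1)$;
\item a self-contained global-existence argument instead of the appeal to~\cite{berchio}.
\end{itemize}

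What the paper's route buys:
\begin{itemize}
\item the precise equivalence $u\sim -I_\infty\int ds/\psi$ and the mass identity $\int_M e^u\,d\Vg=2\pi I_\infty$, which Section~\ref{sec:ex} reuses to show the bounds are actually attained (e.g.\ $u\sim-\gamma_1 r^{1-\varepsilon}$); a one-sided estimate cannot give this;
\item weaker hypotheses: Proposition~\ref{prop:ling} needs only $\psi'\ge 0$ and $\psi\le c_0 r$, whereas your mass bound relies essentially on $\psi'\le 1$, i.e.\ concavity together with $\psi'(0)=1$. That condition is available in this corollary, so nothing is lost here.
\end{itemize}
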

 	
 	From Theorem~\ref{th:main}, we deduce some consequences presented in the following results.
 	
 	\begin{corollary}
 	\label{cor:c1}
 		Let~$(M, g)$ be a complete, connected, non-compact, boundaryless Riemannian manifold of dimension~$n = 2$ with~$\mathrm{Ric} \geq 0$. Let~$u \in C^2(M)$ be a solution to~\eqref{eq:Liou} and assume that there exist~$\beta \in (0,1)$ and~$F \colon [0,+\infty) \to (0,+\infty)$ non-decreasing satisfying~\eqref{eq:hF}. Moreover, suppose that
 		\begin{equation}
 			\label{eq:bbu-1}
 			u(x) \geq - 4 \log r(x) - F^\beta(r(x)) \quad\text{for } r(x) \geq c.
 		\end{equation}
 		Then,~$(M,g)$ is isometric to~$(\R^2,g_{\R^2})$ and~$u$ is a bubble of the form~\eqref{eq:bub}.
 	\end{corollary}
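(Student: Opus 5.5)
Corollary~\ref{cor:c1} should follow directly from Theorem~\ref{th:main}. The idea is to show that hypothesis~\eqref{eq:bbu-1} implies hypothesis~\eqref{eq:bbu}, possibly after replacing $F$ by a comparable non-decreasing function that still satisfies~\eqref{eq:hF}.

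The key observation is that $q$ is bounded from below. Indeed, $q$ is non-decreasing by Bishop--Gromov, and by~\eqref{eq:vg} we have $V(R)\le \pi R^2$. Hence $q(R)\ge 1/\pi$ for every $R>0$, and therefore
\[
q^\beta(r)\ge \pi^{-\beta} \quad\text{for all } r>0 .
\]
Now set $\tilde F \coloneqq \pi F$. This function is non-decreasing and positive. It also satisfies~\eqref{eq:hF} with the same constant $c$, because
\[
\int_c^{+\infty}\frac{dt}{t\tilde F(t)}=\frac1\pi\int_c^{+\infty}\frac{dt}{tF(t)}=+\infty .
\]
Moreover,
\[
q^\beta(r)\,\tilde F^\beta(r)=\pi^\beta q^\beta(r)\, F^\beta(r)\ge F^\beta(r).
\]
Consequently, for $r(x)\ge c$, hypothesis~\eqref{eq:bbu-1} gives
\[
u(x)\ge -4\log r(x)-F^\beta(r(x))\ge -4\log r(x)-q^\beta(r(x))\,\tilde F^\beta(r(x)),
\]
which is exactly~\eqref{eq:bbu} with $\tilde F$ in place of $F$.

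Theorem~\ref{th:main} then applies with the same $\beta\in(0,1)$ and the function $\tilde F$. It yields that $(M,g)$ is isometric to $(\R^2,g_{\R^2})$ and that $u$ is a bubble of the form~\eqref{eq:bub}.

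There is no real obstacle in this argument. The only point to watch is that the rescaled function $\tilde F$ keeps both properties required by the theorem: monotonicity and the divergence condition~\eqref{eq:hF}. Both are preserved under multiplication by a positive constant.
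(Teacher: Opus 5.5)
Your proof is correct and follows the paper's argument: bound $q$ from below, absorb the resulting constant into a rescaled $F$ (which stays non-decreasing and still satisfies~\eqref{eq:hF}), and invoke Theorem~\ref{th:main}. Using $V(R)\le\pi R^2$ directly gives $q\ge 1/\pi$ for every $R>0$, which is slightly cleaner than the paper's $q\ge 1/(2\pi)$ for $R\ge R_0$, since you never have to enlarge the constant $c$.
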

 	
 	We note that Corollary~\ref{cor:c1} significantly improves upon Theorem~1.2 in~\cite{cami} and Theorem~1.1 in~\cite{ou-liou}. In fact, the assumption in~\cite{ou-liou} is precisely~\eqref{eq:asymp_Ou}
 	for some~$\alpha \geq 0$ and~$F$ as in Corollary~\ref{cor:c1}. The condition~\eqref{eq:asymp_Ou} is more restrictive than~\eqref{eq:bbu-1}, since, for every~$\alpha \geq 0$ and~$\beta \in (0,1)$, the function~$t \mapsto 4\alpha \log F(t)$ has slower growth than~$t \mapsto F^\beta(t)$. Moreover, Corollary~\ref{cor:c1} is optimal in the sense that~$\beta = 1$ is not allowed. In this case, for~$F(t) = \varepsilon \log t$ with~$\varepsilon>0$, condition~\eqref{eq:bbu-1} would read as
 	\begin{equation*}
 		u(x) \geq - \left(4+\varepsilon\right) \log r(x) \quad\text{for } r(x) \geq c.
 	\end{equation*}
 	On the other hand, by Theorem~2 in~\cite{cai-lai}, we know that for every~$\gamma > 4$ there exists a non-flat~$(M,g)$, conformal to~$(\R^2,g_{\R^2})$, supporting a solution of~\eqref{eq:Liou} with finite mass, in the sense of~\eqref{eq:finite-mass-manifold}, such that~$u(x) \sim - \gamma \log r(x)$, as~$r(x) \to +\infty$.
 	
 	\begin{corollary}
 		\label{cor:c2}
 		Let~$(M, g)$ be a complete, connected, non-compact, boundaryless Riemannian manifold of dimension~$n = 2$ with~$\mathrm{Ric} \geq 0$. Let~$u \in C^2(M)$ be a solution to~\eqref{eq:Liou} and assume that there exist~$\beta \in (0,1)$ and~$F \colon [0,+\infty) \to (0,+\infty)$ non-decreasing satisfying~\eqref{eq:hF}. Moreover, suppose that
 		\begin{equation}
 			\label{eq:bbu-2}
 			u(x) \geq  - r^\beta(x) F^\beta(r(x)) \quad\text{for } r(x) \geq c.
 		\end{equation}
		Then, it follows that
		\begin{equation}
		\label{eq:limsup}
			\limsup_{R\to +\infty} \frac{V(R)}{R^{1+\varepsilon} F^{\varepsilon}(R)} = + \infty \quad\text{for every } \varepsilon \in [0,1-\beta).
		\end{equation}
		As a consequence,~$(M,g)$ is conformal to~$(\R^2,g_{\R^2})$.
 	\end{corollary}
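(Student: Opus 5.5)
The plan is to argue by contradiction and reduce to Theorem~\ref{th:main}, with a suitable new exponent. Fix $\varepsilon \in [0,1-\beta)$ and suppose that the $\limsup$ in~\eqref{eq:limsup} is finite. Then there are $C>0$ and $R_0 \geq c$ such that $V(R) \leq C R^{1+\varepsilon}F^\varepsilon(R)$ for $R \geq R_0$. This means
\begin{equation*}
q(R) = \frac{R^2}{V(R)} \geq C^{-1} R^{1-\varepsilon} F^{-\varepsilon}(R) \quad\text{for } R\geq R_0.
\end{equation*}
I will show that~\eqref{eq:bbu-2} then implies~\eqref{eq:bbu} for a new exponent $\beta'\in(0,1)$, keeping the same $F$.

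Since $\beta<1-\varepsilon$, we can choose $\beta'$ with $\beta/(1-\varepsilon)<\beta'<1$. Raising the bound on $q$ to the power $\beta'$ and multiplying by $F^{\beta'}$ gives
\begin{equation*}
q^{\beta'}(R)F^{\beta'}(R) \geq C^{-\beta'} R^{(1-\varepsilon)\beta'}F^{(1-\varepsilon)\beta'}(R) = C^{-\beta'}\,(RF(R))^{(1-\varepsilon)\beta'-\beta}\, R^\beta F^\beta(R).
\end{equation*}
The exponent $(1-\varepsilon)\beta'-\beta$ is positive, and $F(R)\geq F(c)>0$ because $F$ is non-decreasing. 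Hence $(RF(R))^{(1-\varepsilon)\beta'-\beta}\to+\infty$, so for $R\ge R_1$ large we get $q^{\beta'}F^{\beta'}\geq R^\beta F^\beta$.

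Consequently, for $r(x)\geq R_1$,
\begin{equation*}
u(x)\geq -r^\beta F^\beta \geq -q^{\beta'}F^{\beta'} \geq -4\log r(x) - q^{\beta'}(r(x))F^{\beta'}(r(x)),
\end{equation*}
where the last step uses $\log r(x)\ge 0$. Condition~\eqref{eq:hF} still holds with $c$ replaced by $R_1$, because the integrand is positive and locally integrable on $[c,R_1]$. Theorem~\ref{th:main} therefore applies with $\beta'$, and it gives that $(M,g)$ is isometric to $\R^2$, so $V(R)=\pi R^2$.

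Plugging this into the standing assumption gives $F^\varepsilon(R)\geq (\pi/C)R^{1-\varepsilon}$ for $R\ge R_0$.
\begin{itemize}
\item If $\varepsilon=0$, this reads $1\geq (\pi/C) R$ for all large $R$, which is absurd.
\item If $\varepsilon>0$, then $F(R)\geq c' R^{a}$ with $a=(1-\varepsilon)/\varepsilon>0$. This makes $\int^{+\infty} dt/(tF(t))\le \int^{+\infty} t^{-1-a}\,dt/c'<+\infty$, contradicting~\eqref{eq:hF}.
\end{itemize}
This proves~\eqref{eq:limsup}. The key step is the choice $\beta'\in(\beta/(1-\varepsilon),1)$, which is possible exactly because $\varepsilon<1-\beta$.

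For the conformality statement, take $\varepsilon=0$ in~\eqref{eq:limsup}: this gives $\limsup_{R\to+\infty} V(R)/R=+\infty$. A complete non-compact surface with non-negative Gaussian curvature has, by Cheeger--Gromoll, a soul that is either a point or a closed geodesic. If the soul is a closed geodesic, the surface is flat and isometric to a flat cylinder or an open flat Möbius band, and then $V(R)$ grows linearly in $R$. That contradicts the $\limsup$ above, so the soul is a point and $M$ is diffeomorphic to $\R^2$. By~\eqref{eq:vg}, $V(R)\le \pi R^2$, so $\int^{+\infty} R\,dR/V(R)=+\infty$ and $M$ is parabolic. By uniformization, a simply connected parabolic surface is conformal to $\R^2$ rather than to the disc.

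I expect the main delicate points to be this last geometric step (quoting the soul theorem and the parabolicity criterion correctly) and the check that~\eqref{eq:hF} is insensitive to enlarging $c$; the analytic reduction itself is straightforward.
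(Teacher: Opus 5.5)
Your proof is correct and follows the paper's argument. You assume the $\limsup$ is finite, turn the volume bound into $q(R)\gtrsim R^{1-\varepsilon}F^{-\varepsilon}(R)$, absorb~\eqref{eq:bbu-2} into~\eqref{eq:bbu} with a new exponent in $(0,1)$, apply Theorem~\ref{th:main}, and contradict~\eqref{eq:hF} via $V(R)=\pi R^2$. The only differences are minor: you take $\beta'$ strictly above $\beta/(1-\varepsilon)$ to absorb the constant, where the paper takes $\beta'=\beta/(1-\varepsilon)$ and $G=c_\varepsilon F$; and for conformality you use the soul theorem, the Cheng--Yau parabolicity criterion and uniformization instead of citing Cohn--Vossen and Huber directly, which is equally valid (flatness when the soul is a closed geodesic is where Cohn--Vossen's inequality is implicitly used).
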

 	
 	We point out that Corollary~\ref{cor:c2} significantly improves upon Theorem~1.3 in~\cite{cami} and Theorem~1.2 in~\cite{ou-liou}. Moreover, also in this case, Corollary~\ref{cor:c2} is optimal in the sense that the case~$\beta=1$ cannot occur, as follows from the examples in Section~\ref{sec:ex}. \newline
 	
 	Finally, we mention that the quasilinear case of the~$n$-Liouville equation with~$n \geq 3$ has also been studied in the literature, both in the Euclidean and Riemannian settings -- see, for instance,~\cite{cmr-nLiou,cirli-esp,esposito} and the references therein. In this paper, however, we focus exclusively on the classical Liouville equation~\eqref{eq:Liou}.
 	
 	% ----------------------------------
 	% STRUC
 	
 	\subsection{Structure of the paper}
 	
 	In Section~\ref{sec:prel}, we recall some preliminary facts that will be used in the proofs of the results in Section~\ref{sec:proof}. Section~\ref{sec:mmanif} contains a brief intermezzo on concave functions and smooth model manifolds, followed by an analysis of radial solutions on these manifolds in Section~\ref{sec:radsol}. Finally, in Section~\ref{sec:ex} we provide some examples showing the optimality of Corollary~\ref{cor:c2}.
 	
 	% -------------------------------------------------------------------
 	% KNOWN FAC
	
	\section{A few preliminary facts}
	\label{sec:prel}
	Following~\cite{cami}, we introduce the auxiliary function~$v \coloneqq e^{-\frac{u}{2}}$, which solves
	\begin{equation}
		\label{eq:eq-v}
		\Delta v = P \quad\text{in } M,
	\end{equation}
	where the~$P$-function is given by
	\begin{equation}
		\label{eq:defP}
		P \coloneqq \frac{1}{v} \left(\abs*{\nabla v}^2 + \frac{1}{2}\right) \!.
	\end{equation}
	
	The following result has been established in~\cite{cami,ou-liou}. 
	
	\begin{lemma}
		\label{lem:DP-Dln}
		Let~$(M,g)$ be a Riemannian manifold of dimension~$n=2$ with~$\mathrm{Ric} \geq 0$. Let~$v \in C^2(M)$ be a solution to~\eqref{eq:eq-v} and let~$P$ be defined by~\eqref{eq:defP}. Then, we have
		\begin{equation*}
			\Delta P \geq 0 \quad\text{and}\quad \Delta \log P \geq 0.
		\end{equation*}
	\end{lemma}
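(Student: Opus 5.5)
We compute directly, using a Bochner-type identity for $v$ together with the equation $\Delta v = P$. Write $P v = \abs{\nabla v}^2 + \tfrac12$. Taking the gradient gives
\begin{equation*}
v\nabla P + P\nabla v = 2\,\nabla^2 v(\nabla v,\cdot).
\end{equation*}
Taking the Laplacian of $Pv$ and applying the Bochner formula gives
\begin{equation*}
v\Delta P + 2\langle\nabla P,\nabla v\rangle + P\Delta v = 2\abs{\nabla^2 v}^2 + 2\langle\nabla v,\nabla\Delta v\rangle + 2\,\mathrm{Ric}(\nabla v,\nabla v).
\end{equation*}
We then substitute $\Delta v = P$ and $\nabla\Delta v=\nabla P$. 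The terms $2\langle\nabla P,\nabla v\rangle$ cancel on both sides, which leaves
\begin{equation*}
v\Delta P = 2\abs{\nabla^2 v}^2 - P^2 + 2\,\mathrm{Ric}(\nabla v,\nabla v).
\end{equation*}
Since $\mathrm{Ric}\ge 0$, it remains to show that $2\abs{\nabla^2 v}^2\ge P^2$ (and a sharper version for $\log P$).

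In dimension $n=2$ we split the Hessian into its trace and traceless parts: $\nabla^2 v = \tfrac{P}{2}g + A$, where $A$ is traceless. Then
\begin{equation*}
2\abs{\nabla^2 v}^2 = P^2 + 2\abs{A}^2,
\end{equation*}
so $v\Delta P = 2\abs{A}^2 + 2\,\mathrm{Ric}(\nabla v,\nabla v)\ge 0$. Since $v=e^{-u/2}>0$, this gives $\Delta P\ge 0$.

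For $\log P$, note that $P>0$ and $\Delta\log P = \bigl(P\Delta P - \abs{\nabla P}^2\bigr)/P^2$, so we need
\begin{equation*}
\abs{\nabla P}^2 \le P\Delta P = \frac{P}{v}\bigl(2\abs{A}^2 + 2\,\mathrm{Ric}(\nabla v,\nabla v)\bigr).
\end{equation*}
From the gradient identity, $v\nabla P = 2A(\nabla v,\cdot)$, because the trace part $P\nabla v$ cancels. Hence
\begin{equation*}
v^2\abs{\nabla P}^2 = 4\abs{A(\nabla v)}^2.
\end{equation*}
For a traceless symmetric $2\times2$ matrix, $\abs{A\xi}^2 = \tfrac12\abs{A}^2\abs{\xi}^2$, since the eigenvalues are $\pm\lambda$ and $\abs{A}^2=2\lambda^2$. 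Therefore
\begin{equation*}
v^2\abs{\nabla P}^2 = 2\abs{A}^2\abs{\nabla v}^2 \le 2\abs{A}^2\bigl(\abs{\nabla v}^2+\tfrac12\bigr) = 2\abs{A}^2\,Pv,
\end{equation*}
that is, $\abs{\nabla P}^2\le \tfrac{P}{v}\,2\abs{A}^2\le P\Delta P$. This yields $\Delta\log P\ge 0$.

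The main delicate step is the last one. It uses the exact two-dimensional identity $\abs{A\xi}^2=\tfrac12\abs{A}^2\abs{\xi}^2$, not merely the Kato-type bound $\abs{A\xi}^2\le\abs{A}^2\abs{\xi}^2$. Together with the additive constant $\tfrac12$ in $P$, this exact identity is what closes the $\log P$ inequality. The computation is pointwise, so it only needs $v$ regular enough for the Bochner formula: $v\in C^3$, which follows from elliptic regularity applied to \eqref{eq:Liou}.
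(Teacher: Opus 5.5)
Your proof is correct and complete. The paper does not prove this lemma itself; it quotes it from \cite{cami,ou-liou}. Your argument is a self-contained Bochner-type $P$-function computation that proves both inequalities. It uses Bochner's formula with $\Delta v=P$, the trace/traceless splitting $\nabla^2 v=\tfrac{P}{2}g+A$ in dimension two, the identity $v\nabla P=2A(\nabla v,\cdot)$, and the exact relation $\abs{A\xi}^2=\tfrac12\abs{A}^2\abs{\xi}^2$. The regularity it needs is the $C^3$ regularity given by the paper's remark, and positivity comes from $v=e^{-u/2}>0$. Putting your steps together gives the exact identity
\[
v^2P^2\,\Delta\log P=\abs{A}^2+2Pv\,\mathrm{Ric}(\nabla v,\nabla v).
\]
This identity shows exactly how the additive constant $\tfrac12$ in $P$ produces the nonnegative term $\abs{A}^2$ that closes the $\log P$ estimate.
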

	
	 \begin{remark}
	 	We point out that, by standard elliptic regularity theory, any classical solution~$u \in C^2(M)$ to~\eqref{eq:Liou} is in fact of class~$C^{3,\alpha}_{\mathrm{loc}}(M)$. Since~$v = e^{-\frac{u}{2}}$, we consequently also have~$v \in C^3(M)$. In particular, this allows us to apply the results in~\cite{cami,ou-liou}, which were originally stated for functions~$u,v \in C^3(M)$.
	\end{remark}
	
	We now prove a Liouville-type result \textit{à la} Karp, which is crucial in the following.
	
	\begin{lemma}
	\label{lem:karp}
		Let~$(M, g)$ be a complete, non-compact Riemannian manifold of dimension~$n \geq 2$ and let~$F \colon [0,+\infty) \to (0,+\infty)$ be a non-decreasing function satisfying~\eqref{eq:hF}. Suppose that~$U \in C^2(M)$ satisfies~$\Delta U \geq 0$ in~$M$ and
		\begin{equation}
		\label{eq:hU}
			\int_{B_R} U_{+}^\theta \, d\Vg \leq R^2 F(R) \quad \text{as } R \to  +\infty 
		\end{equation}
		for some~$\theta>1$. Then,~$U_{+}$ is constant in~$M$.
	\end{lemma}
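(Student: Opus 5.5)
The plan is to run Karp's argument on the positive part $w \coloneqq U_+$. First we regularize: for $\varepsilon>0$ we work with $w_\varepsilon \coloneqq \bigl((U-\varepsilon)_+^2+\varepsilon^2\bigr)^{1/2}$, or with a smooth convex increasing $\Phi_\varepsilon$ approximating $t\mapsto t_+$. Then $\Phi_\varepsilon(U)$ is $C^2$, satisfies $\Delta \Phi_\varepsilon(U)=\Phi_\varepsilon'(U)\Delta U+\Phi_\varepsilon''(U)\abs*{\nabla U}^2\geq 0$, and is dominated by $U_+$ plus a constant. We prove all estimates for these functions with constants independent of $\varepsilon$, and pass to the limit at the end. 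So below we treat $w\geq 0$ as a subharmonic function with $\int_{B_R} w^\theta \, d\Vg \leq R^2F(R)$ for $R$ large.

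\emph{Caccioppoli step.} Let $\varphi$ be a Lipschitz cutoff with $\varphi\equiv 1$ on $B_R$, $\varphi\equiv 0$ outside $B_{2R}$, and $\abs*{\nabla\varphi}\leq 1/R$. Test $\Delta w\geq 0$ against $\varphi^2 w^{\theta-1}$ (admissible since $\theta>1$). This gives
\begin{equation*}
(\theta-1)\int \varphi^2 w^{\theta-2}\abs*{\nabla w}^2 \, d\Vg \leq 2\int_{B_{2R}\setminus B_R} \varphi\, w^{\theta-1}\abs*{\nabla w}\abs*{\nabla \varphi} \, d\Vg .
\end{equation*}
The key point is to apply Cauchy--Schwarz only on the annulus, which is Karp's trick. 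Set $I(R)\coloneqq\int_{B_R} w^{\theta-2}\abs*{\nabla w}^2 \, d\Vg$ and write $w^{\theta-1}\abs*{\nabla w}=w^{\frac{\theta-2}{2}}\abs*{\nabla w}\cdot w^{\frac{\theta}{2}}$. Then
\begin{equation*}
I(R)\leq \frac{C}{R}\bigl(I(2R)-I(R)\bigr)^{1/2}\Bigl(\int_{B_{2R}} w^\theta \, d\Vg\Bigr)^{1/2}, \qquad\text{hence}\qquad I(R)^2\leq C\,F(2R)\bigl(I(2R)-I(R)\bigr),
\end{equation*}
where we used \eqref{eq:hU}. Here $C$ depends only on $\theta$.

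\emph{Summation step.} Suppose by contradiction that $I(R_0)>0$ for some $R_0$, and set $I_k\coloneqq I(2^kR_0)$ and $F_k\coloneqq F(2^{k+1}R_0)$. From \eqref{eq:hF} and the monotonicity of $F$ we get $\sum_k 1/F_k=+\infty$, by comparing $\int dt/(tF(t))$ on dyadic intervals. The recursion gives
\begin{equation*}
\frac1{I_k}-\frac1{I_{k+1}}=\frac{I_{k+1}-I_k}{I_kI_{k+1}}\geq \frac{1}{C F_k}\,\frac{I_k}{I_{k+1}} .
\end{equation*}
Summing these telescoping quantities yields $\frac1{I_{k_0}}\geq \frac1C\sum_k \frac{I_k}{F_kI_{k+1}}$. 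The desired contradiction is that the right-hand side diverges. This is the step I expect to be the main obstacle, because the ratio $I_k/I_{k+1}$ could a priori degenerate.

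To handle it, split the indices into those with $I_{k+1}\leq 2I_k$, which contribute at least $\frac1{2CF_k}$ each, and those with $I_{k+1}>2I_k$. If only finitely many jumps occur, divergence of $\sum 1/F_k$ closes the argument. If infinitely many occur, I would first try to rule out $I_k\to\infty$ too fast by using the growth bound directly. Since $I(R)^2\leq CF(2R)\,I(2R)$, and also $I(R)\leq C R^{-2}\cdot R^2F(2R)=CF(2R)$ by the non-split Caccioppoli estimate, $I$ grows at most like $F$. Combining this bound with the jump indices gives $\frac{1}{I_k}\geq \frac{1}{CF_k}$ there, so those indices also contribute $\gtrsim 1/F_k$ through $\frac1{I_k}-\frac1{I_{k+1}}\geq \frac1{2I_k}$. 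If this dyadic bookkeeping turns out to be too lossy, I would fall back on the continuous version of Karp's ODE argument with cutoffs supported on $B_{R+\rho}\setminus B_R$ and a variable $\rho$.

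\emph{Conclusion.} The contradiction shows that $I\equiv 0$, that is, $w^{\theta-2}\abs*{\nabla w}^2\equiv 0$. This holds uniformly in $\varepsilon$ for the regularizations, so $\nabla U=0$ on $\{U>0\}$. Hence $U$ is locally constant on each connected component of the open set $\{U>0\}$, and by continuity it is constant, equal to some positive value, on each such component. At a boundary point of a component $U$ would equal both that positive constant and $0$, which is impossible. So either $\{U>0\}=\emptyset$ or $\{U>0\}=M$ with $U$ constant, using that $M$ is connected. In both cases $U_+$ is constant.
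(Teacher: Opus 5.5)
Your route differs from the paper's. The paper does not reprove Karp's estimate. It takes $\varphi_\varepsilon(t)=\sqrt[4]{t_+^4+\varepsilon^4}-\varepsilon$ and notes that $U_\varepsilon=\varphi_\varepsilon(U)$ is a non-negative $C^2$ subharmonic function with $0\le U_\varepsilon\le U_+$, so \eqref{eq:hU} holds verbatim for $U_\varepsilon$. It then applies Karp's Theorem~2.2 as a black box to get $U_\varepsilon\equiv c_\varepsilon$, and lets $\varepsilon\to0$. You instead prove Karp's theorem from scratch, and that core is correct and already complete. With $C=16/(\theta-1)^2$, the annulus Cauchy--Schwarz gives $I(R)^2\le CF(2R)\,(I(2R)-I(R))$, and Young's inequality gives $I(R)\le CF(2R)$. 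So at every dyadic index, jump or not, $\frac{1}{I_k}-\frac{1}{I_{k+1}}\ge\frac{1}{2CF_k}$. Telescoping against $\sum_k 1/F_k=+\infty$ gives the contradiction, so no continuous fallback is needed. Your version shows exactly where \eqref{eq:hF} enters; the paper's buys brevity.

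The step that fails as written is the regularization. Your $w_\varepsilon=((U-\varepsilon)_+^2+\varepsilon^2)^{1/2}$ satisfies $w_\varepsilon\ge\varepsilon$, so $\int_{B_R}w_\varepsilon^\theta\,d\Vg\ge\varepsilon^\theta V(R)$. More generally, anything only dominated by $U_+$ plus a positive constant picks up a term of order $V(R)$. The lemma assumes nothing on volume growth, and this is fatal. On $\R^3$, for instance, a bound $\int_{B_R}w_\varepsilon^\theta\,d\Vg\le R^2G(R)$ forces $G(R)\gtrsim R$, which is incompatible with \eqref{eq:hF}. So for no fixed $\varepsilon$ can your recursion be run on $w_\varepsilon$. (Also, $w_\varepsilon$ is only $C^{1,1}$, not $C^2$.)

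You must take $0\le\Phi_\varepsilon(t)\le t_+$, as the paper's $\varphi_\varepsilon$ does. Then \eqref{eq:hU} holds for each fixed $\varepsilon$ and no uniformity in $\varepsilon$ is needed. Alternatively, let $\varepsilon\to0$ in the local Caccioppoli inequality at fixed $R$, using Fatou on the left, before the growth bound is ever used; the limit cannot be postponed to the end.

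The price of the first fix is that $w=\Phi_\varepsilon(U)$ now vanishes on $\{U\le0\}$, so for $1<\theta<2$ the test function $\varphi^2w^{\theta-1}$ is not directly admissible. Test with $\varphi^2(w+\delta)^{\theta-1}$ and let $\delta\downarrow0$, using monotone convergence on the left and dominated convergence on the right. With this, $I\equiv0$ gives $\nabla w\equiv0$, so $w$ is constant, and letting $\varepsilon\to0$ shows $U_+$ is constant. Your component argument also works.
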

	\begin{proof}
		For every~$\varepsilon>0$ and~$t \in \R$, let
		\begin{equation*}
		       \varphi_\varepsilon(t) \coloneqq \sqrt[4]{t_{+}^4+\varepsilon^4} -\varepsilon.
		\end{equation*}
		Then,~$\varphi_\varepsilon \in C^2(\R)$ is non-decreasing and convex. Define~$U_\varepsilon \coloneqq 
		\varphi_\varepsilon(U)$ and observe that
		\begin{equation}
		\label{eq:Uep}
			0 \leq U_\varepsilon \leq U_{+} \quad\text{and}\quad U_\varepsilon \to U_{+} \quad\text{in } M \text{ as } \varepsilon \to 0.
		\end{equation}
		Moreover, a direct computation reveals that
		\begin{equation*}
			\Delta U_\varepsilon = \varphi_\varepsilon'(U) \,\Delta U + \varphi_\varepsilon''(U) \,\abs*{\nabla U}^2 \geq 0 \quad\text{in } M
		\end{equation*}
		and, in light of~\eqref{eq:hU}--\eqref{eq:Uep}, we also have
		\begin{equation*}
			\int_{B_R} U_{\varepsilon}^\theta \, d\Vg \leq \int_{B_R} U_{+}^\theta \, d\Vg \leq R^2 F(R) \quad \text{as } R \to + \infty. 
		\end{equation*}
		As a consequence, Theorem~2.2 in~\cite{karp} implies that~$U_\varepsilon$ is constant in~$M$, that is~$U_\varepsilon = c_\varepsilon$ for some~$c_\varepsilon \geq 0$. Finally, letting~$\varepsilon \to 0$, from~\eqref{eq:Uep} we infer the desired conclusion.
	\end{proof}
	
	% -------------------------------------------------------------------
	% PROOF
	
	\section{Proof of the main results}
	\label{sec:proof}
	
	We start by proving the main result of this manuscript.
	
	\begin{proof}[Proof of Theorem~\ref{th:main}]
		We aim to prove that~$P$ is constant in~$M$, the conclusion then follows as in the proof of Theorem~1.2 in~\cite{cami}.
		
		We start by observing that, since~$v = e^{-\frac{u}{2}}$, assumption~\eqref{eq:bbu} implies that
		\begin{equation}
			\label{eq:buv}
			v(x) \leq r^2(x) e^{q^\beta(r(x)) F^\beta(r(x))} \quad\text{for } r(x) \geq c.
		\end{equation}
		
		We first aim to convert~\eqref{eq:buv} into an integral estimate for~$P$. To this end, for any~$R>0$, we consider a standard  cut-off function~$\eta$, namely, a function~$\eta \in C^{0,1}_c(M)$ satisfying
		\begin{equation}
			\label{eq:cutoff}
			0 \leq \eta \leq 1 \text{ in } M, \; \eta=1 \text{ in } B_R, \; \eta=0 \text{ in } M \setminus B_{2R} \text{ and } \abs{\nabla \eta} \leq \frac{1}{R} \text{ in } B_{2R} \setminus B_{R}.
		\end{equation}
		We test~\eqref{eq:eq-v} with~$\psi=\eta^2$ and integrate by parts, obtaining
		\begin{equation*}
			\int_{M}  v^{-1} \,\abs*{\nabla v}^2 \eta^2 \, d \Vg + \frac{1}{2} \int_{M} v^{-1} \eta^2 \, d \Vg = \int_{M} P \eta^2 \, d \Vg = - 2 \int_{M} \left\langle \nabla v, \nabla \eta \right\rangle \eta \, d\Vg,
		\end{equation*}
		where we exploited~\eqref{eq:defP}. Applying Young's inequality yields
		\begin{equation*}
			\int_{M} v^{-1} \,\abs*{\nabla v}^2 \eta^2 \, d \Vg + \frac{1}{2} \int_{M} v^{-1} \eta^2 \, d \Vg \leq \frac{1}{2} \int_{M} v^{-1} \,\abs*{\nabla v}^2 \eta^2 \, d \Vg + 2 \int_{M} v \,\abs*{\nabla \eta}^2 \, d \Vg,
		\end{equation*}
		which, using the properties of~$\eta$ in~\eqref{eq:cutoff}, implies
		\begin{equation*}
			\int_{B_R} P \, d\Vg \leq \int_{M} P \eta^2 \, d \Vg \leq 4 \int_{M} v \,\abs*{\nabla \eta}^2 \, d \Vg \leq \frac{4}{R^2} \int_{B_{2R} \setminus B_{R}} v \, d \Vg.
		\end{equation*}
		Combining the latter with~\eqref{eq:buv} entails that
		\begin{equation*}
			\int_{B_R} P \, d\Vg \leq 16 \, \Vg\!\left(B_{2R}\right) e^{q^\beta(2R) F^\beta(2R)} \quad\text{for } R \geq c,
		\end{equation*}
		hence
		\begin{equation}
		\label{eq:estP}
			\dashint_{B_R} P \, d\Vg \coloneqq \frac{1}{\Vg(B_{R})} \int_{B_R} P \, d\Vg \leq 16 \,\frac{\Vg(B_{2R})}{\Vg(B_{R})} \, e^{q(2R)^\beta F(2R)^\beta} \leq 4^3 e^{q(2R)^\beta F(2R)^\beta} \!,
		\end{equation}
		where in the latter we used the first inequality in~\eqref{eq:vg}.

		We now shall apply Lemma~\ref{lem:karp} to~$\log P$. To this end, we start by observing that, for every~$\theta>1$,~$c_\theta \geq e^{\theta-1}>1$, and~$t>0$, the function~$\varphi_\theta(t) \coloneqq \log^\theta(t+c_\theta)$ is increasing and concave, therefore, Jensen's inequality gives that
		\begin{equation*}
			\dashint_{B_R} \varphi_\theta(P) \, d\Vg \leq \varphi_\theta \!\left(\dashint_{B_R} P \, d\Vg\right) \quad\text{for every } R>0,
		\end{equation*}
		which reads as
		\begin{equation}
		\label{eq:estL}
			\dashint_{B_R} \log^\theta(P+c_\theta) \, d\Vg \leq \log^\theta \!\left(\dashint_{B_R} P \, d\Vg + c_\theta\right) \quad\text{for every } R>0.
		\end{equation}
		Hence, combining~\eqref{eq:estP}--\eqref{eq:estL}, it follows that
		\begin{equation*}
			\dashint_{B_R} \left(\log P\right)_{+}^\theta \, d\Vg \leq \dashint_{B_R} \log^\theta(P+c_\theta) \, d\Vg \leq \log^\theta \!\left(4^3 e^{q^\beta(2R) F^\beta(2R)} + c_\theta\right) \quad\text{for } R \geq c
		\end{equation*}
		and since~$q$ and~$F$ are positive, we obtain
		\begin{equation*}
			\begin{split}
				\dashint_{B_R} \left(\log P\right)_{+}^\theta \, d\Vg &\leq \log^\theta \!\left(2 \max\!\left\{64,c_\theta\right\} e^{q^\beta(2R) F^\beta(2R)}\right) \\
				&\leq 2^{\theta-1} \!\left[\log^\theta \!\left(2 \max\!\left\{64,c_\theta\right\}\right) + q^{\beta\theta}(2R) F^{\beta\theta}(2R)\right] \quad\text{for } R \geq c.
			\end{split}
		\end{equation*}
		Finally, recalling that~$\beta \in (0,1)$ and choosing~$\theta=\frac{1}{\beta}>1$, we infer that
		\begin{equation}
		\label{eq:toest}
			\dashint_{B_R} \left(\log P\right)_{+}^\theta \, d\Vg \leq 2^{\theta-1} \!\left[\mathsf{c}_\theta + q(2R) F(2R)\right] \quad\text{for } R \geq c,
		\end{equation}
		where~$\mathsf{c}_\theta \coloneqq \log^\theta \!\left(2 \max\!\left\{64,c_\theta\right\}\right)>1$.
		
		Recalling the definition of~$q$ in~\eqref{eq:defq} and~\eqref{eq:vg}, from~\eqref{eq:toest} we deduce
		\begin{align*}
			\int_{B_R} \left(\log P\right)_{+}^\theta \, d\Vg &\leq 2^{\theta-1} \!\left[\mathsf{c}_\theta + \frac{4R^2}{V(2R)} F(2R)\right] V(R) \leq 2^{\theta-1} \!\left[\mathsf{c}_\theta \, V(R) + 4R^2 F(2R)\right] \\
			&\leq 2^{\theta-1} \!\left[\pi \mathsf{c}_\theta + 4 F(2R)\right] R^2 \leq 2^{\theta+1} \mathsf{c}_\theta \left[1 + F(2R)\right] R^2 \quad\text{for } R \geq c.
		\end{align*}
		
		Since~$F$ is non-decreasing, for~$R \geq c$ we have~$F(2R) \geq F(c)$, which, combined with the latter estimate, entails
		\begin{equation}
		\label{eq:fin}
			\int_{B_R} \left(\log P\right)_{+}^\theta \, d\Vg \leq \mathsf{c}_{c,\theta} R^2 F(2R) \quad\text{for } R \geq c,
		\end{equation}
		for some~$\mathsf{c}_{c,\theta}>0$ depending only on~$\theta$ and~$F(c)$.
		
		We now set~$U \coloneqq \log P$ and~$G(R) \coloneqq \mathsf{c}_{c,\theta} F(2R)$. Since~$F$ is non-decreasing and satisfies~\eqref{eq:hF} by assumption, it follows that~$G$ is also non-decreasing and satisfies~\eqref{eq:hF}, possibly for a larger value of~$c$. Moreover, by Lemma~\ref{lem:DP-Dln}, we also have~$\Delta U \geq 0$ in~$M$. Since~\eqref{eq:fin} can be rewritten as
		\begin{equation*}
			\int_{B_R} U_{+}^\theta \, d\Vg \leq R^2 G(R),
		\end{equation*}
		we are in a position to apply Lemma~\ref{lem:karp} and infer that~$U_{+}$ is constant in~$M$. This is equivalent to~$\left(\log P\right)_{+} = \kappa \geq 0$ in~$M$. If~$\kappa > 0$, we immediately conclude that~$P = e^\kappa$ in~$M$; otherwise we have~$0<P \leq 1$ in~$M$. In this case, the volume growth~\eqref{eq:vg}, together with Corollary~1 in~\cite{cy}, implies that~$M$ is parabolic. Since~$P$ is bounded and subharmonic by Lemma~\ref{lem:DP-Dln}, it must be constant in this case as well.
	\end{proof}
	
	We now present the proofs of the last two corollaries.
	
	\begin{proof}[Proof of Corollary~\ref{cor:c1}]
		By the Bishop-Gromov volume comparison theorem, the function~$q$ defined in~\eqref{eq:defq} satisfies
		\begin{equation*}
			\lim_{R \to +\infty} q(R) \in \left[\pi^{-1},+\infty\right] \!,
		\end{equation*}
		hence there exists~$R_0>0$ such that~$q(R) \geq \frac{1}{2\pi}$ for all~$R \geq R_0$. As a consequence, from~\eqref{eq:bbu-1} we obtain
		\begin{equation*}
			\begin{split}
				u(x) &\geq - 4 \log r(x) - \frac{1}{(2\pi)^\beta} \left[2\pi F(r(x)) \right]^\beta \\
				&\geq - 4 \log r(x) - q^\beta(r(x)) G^\beta(r(x)) \quad\text{for } r(x) \geq \max\!\left\{c,R_0\right\} \!,
			\end{split}
		\end{equation*}
		where~$G(R) \coloneqq 2\pi F(R)$. Thanks to the assumptions on~$F$, the function~$G$ satisfies the hypotheses of Theorem~\ref{th:main}, and thus we conclude the desired result.
	\end{proof}
	
	\begin{proof}[Proof of Corollary~\ref{cor:c2}]
		By contradiction, suppose that~\eqref{eq:limsup} does not hold true for some~$\varepsilon \in [0,1-\beta)$, then
		\begin{equation}
		\label{eq:volg}
			V(R) \leq c_\varepsilon  R^{1+\varepsilon} F^{\varepsilon}(R) \quad\text{for every } R \geq R_\varepsilon,
		\end{equation}
		for some~$R_\varepsilon>0$ and~$c_\varepsilon>0$. Recalling~\eqref{eq:defq}, it follows that
		\begin{equation*}
			c_\varepsilon^{\frac{1}{1-\varepsilon}} F^{\frac{\varepsilon}{1-\varepsilon}}(R) \, q^{\frac{1}{1-\varepsilon}}(R) \geq R \quad\text{for every } R \geq R_\varepsilon.
		\end{equation*}
		Hence, from~\eqref{eq:bbu-2}, we obtain
		\begin{equation*}
			\begin{split}
				u(x) &\geq - c_\varepsilon^{\frac{\beta}{1-\varepsilon}} F^{\frac{\varepsilon\beta }{1-\varepsilon}}(r(x)) \, q^{\frac{\beta}{1-\varepsilon}}(r(x)) F^\beta(r(x)) \\ &\geq - 4 \log r(x) - q^{\beta'}\!(r(x)) G^{\beta'}\!(r(x)) \quad\text{for } r(x) \geq \max\!\left\{c,R_\varepsilon\right\} \!,
			\end{split}
		\end{equation*}
		where~$G(R) \coloneqq c_\varepsilon F(R)$ and~$\beta' \coloneqq \frac{\beta}{1-\varepsilon} \in (0,1)$. As above, by Theorem~\ref{th:main}, we conclude that~$(M,g)$ is isometric to~$(\R^2,g_{\R^2})$. Consequently, it follows that
		\begin{equation}
		\label{eq:eu-vg}
			V(R) = \mathsf{c} R^2
		\end{equation}
		for some~$\mathsf{c}>0$. If~$\varepsilon=0$, this immediately contradicts~\eqref{eq:volg}. Hence, we may assume~$\varepsilon>0$. Combining~\eqref{eq:volg}--\eqref{eq:eu-vg}, we obtain
		\begin{equation*}
			F(R) \geq \mathsf{c} R^{\frac{1}{\varepsilon}-1} \quad\text{for every } R \geq R_\varepsilon,
		\end{equation*}
		for a possibly different~$\mathsf{c}>0$. As~$\varepsilon \in (0,1)$, this contradicts~\eqref{eq:hF}.
		
		Finally, since~$M$ is two-dimensional with~$\mathrm{Ric} \geq 0$ it follows that~$K_g \geq 0$. Therefore, the classical results of Cohn–Vossen~\cite{cv} and Huber~\cite{huber} imply that~$M$ is either conformal to the Euclidean plane endowed with the standard metric, or is isometric to the flat cylinder~$\R \times \sfera^1$ or, in the non-orientable case, to the flat open M\"obius strip. Since the latter two alternatives have linear volume growth, which is ruled out by~\eqref{eq:limsup} with~$\varepsilon=0$, the conclusion follows.
	\end{proof}
	
	% -------------------------------------------------------------------
	% MODEL MANIF
	
	\section{Intermezzo on concave functions}
	\label{sec:mmanif}
	
	Before proceeding with our analysis, we recall a lemma concerning concave functions.
	
	\begin{lemma}
		\label{lem:lemmino}
		Let~$\tau>0$ and let~$f \in C^1([\tau,+\infty))$ be a concave function such that~$f > 0$ in~$[\tau,+\infty)$. Then, it follows that
		\begin{equation*}
			f' \geq 0 \quad\text{in } [\tau,+\infty)
		\end{equation*}
		and precisely one of the following two alternatives holds:
		\begin{enumerate}[leftmargin=*, label=$(\roman*)$]
			\item\label{it:lem1} we have~$f' > 0$ in~$[\tau,+\infty)$;
			\item\label{it:lem2} there exists~$t_0 \geq \tau$ such that~$f'(t) > 0$ for~$t \in (\tau,t_0)$ and~$f(t) = f(t_0)$ for~$t \in [t_0,+\infty)$.
		\end{enumerate}
	\end{lemma}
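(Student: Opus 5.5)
We argue directly from concavity, using that for a $C^1$ concave function the derivative $f'$ is non-increasing on $[\tau,+\infty)$.

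\emph{Step 1: $f'\ge 0$.} Suppose by contradiction that $f'(t_1)<0$ for some $t_1\ge\tau$. Since $f'$ is non-increasing, $f'(t)\le f'(t_1)$ for every $t\ge t_1$. Integrating gives
\begin{equation*}
f(t)\le f(t_1)+f'(t_1)(t-t_1)\to-\infty \quad\text{as } t\to+\infty.
\end{equation*}
This contradicts $f>0$ on $[\tau,+\infty)$. Hence $f'\ge 0$ throughout.

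\emph{Step 2: the dichotomy.} Let $Z\coloneqq\{t\ge\tau : f'(t)=0\}$.

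If $Z=\emptyset$, then Step 1 gives $f'>0$ on $[\tau,+\infty)$, which is alternative~\ref{it:lem1}.

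Otherwise, set $t_0\coloneqq\inf Z$. Since $f'$ is continuous, $Z$ is closed, so $t_0\in Z$ and $f'(t_0)=0$. For $t\ge t_0$, monotonicity and Step 1 give $0\le f'(t)\le f'(t_0)=0$. Thus $f'\equiv 0$ on $[t_0,+\infty)$, and so $f(t)=f(t_0)$ there. For $t\in(\tau,t_0)$ we have $t\notin Z$ by minimality of $t_0$, so $f'(t)>0$. This is alternative~\ref{it:lem2}; the interval $(\tau,t_0)$ is empty when $t_0=\tau$.

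\emph{Exclusivity.} The two alternatives cannot hold simultaneously, because~\ref{it:lem1} gives $f'(t_0)>0$ while~\ref{it:lem2} gives $f'(t_0)=0$.

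The proof has no real obstacle. The only point needing care is that $t_0$ actually lies in $Z$, which follows from the continuity of $f'$ (guaranteed by $f\in C^1$).
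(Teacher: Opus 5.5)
Your proof is correct and follows essentially the same route as the paper: concavity (the tangent-line bound, equivalently the monotonicity of $f'$) rules out $f'<0$, and $t_0$ is then taken as an infimum. You take the infimum of the zero set of $f'$, while the paper takes the infimum of the points beyond which $f$ is constant; these coincide here because $f'$ is non-increasing and non-negative, and your explicit exclusivity check is a small addition the paper leaves implicit.
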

	\begin{proof}
		By contradiction, suppose that~$f'(t_\ast)<0$ for some~$t_\ast \geq t$. Since~$f$ is positive and concave, it follows that
		\begin{equation}
		\label{eq:conc}
			0 < f(t) \leq f(t_\ast) + f'(t_\ast) \left(t-t_\ast\right) \quad\text{for every } t \geq t_\ast.
		\end{equation}
		Since the right-hand side of~\eqref{eq:conc} vanishes for some~$t \in [t_\ast,+\infty)$, this is impossible. Therefore,~$f' \geq 0$ in~$[\tau,+\infty)$.
		
		Now suppose that there exists~$t_1 \geq \tau$ such that~$f'(t_1) = 0$. By concavity, we have
		\begin{equation*}
			0 \leq f'(t) \leq f'(t_1) = 0 \quad\text{for all } t \geq t_1,
		\end{equation*}
		thus, we set
		\begin{equation}
		\label{eq:def-t0}
			t_0 \coloneqq \inf \!\left\{t \geq \tau \mid f(s)=f(t_1) \;\;\text{for every } s \geq t\right\} \in [\tau,t_1].
		\end{equation}
		By definition,~$f(t) = f(t_0)$ for~$t \in [t_0,+\infty)$. Moreover,~$f'(t) > 0$ for every~$t \in (\tau,t_0)$. Otherwise, as above, we would have~$f(s)=f(t)$ for all~$s \geq t$. Since~$t<t_0 \leq t_1$, this yields~$f(s)=f(t_1)$ for all~$s \geq t$, contradicting the definition of~$t_0$ in~\eqref{eq:def-t0}.
	\end{proof}
	
	We now construct two-dimensional smooth model manifolds with~$\mathrm{Ric} \geq 0$ that is not identically zero and investigate the behavior of~$q$ in this setting in the next two results.
	
	\begin{proposition}
		\label{prop:varmod}
		Let~$\tau>0$ and~$k \in \{2,3,\dots\} \cup \{\infty\}$. Let~$f \in C^k([\tau,+\infty))$ be such that
		\begin{equation}
		\label{eq:assf}
			f>0, \quad f'' \leq 0 \quad\text{in } [\tau,+\infty) \quad\text{and}\quad f'(\tau) <1.
		\end{equation}
		Then, it follows that:
		\begin{enumerate}[leftmargin=*]
			\item\label{it:pm1} there exist~$\psi \colon [0,+\infty) \to [0,+\infty)$,~$\psi \in C^k([0,+\infty))$,~$t_0 > \max\!\left\{\tau, \frac{2}{1-f'(\tau)}\right\}$, and $f_0 \in (0,t_0+1)$ such that
			\begin{gather}
			\label{eq:expr-psi}
				\psi(t)=t \quad\text{for } t \in [0,t_0] \quad\text{and}\quad \psi(t)=f(t)+f_0 \quad\text{for } t \in [t_0+1,+\infty), \\
			\label{eq:der-psi}
				0 \leq \psi'(t)\leq 1 \quad\text{and}\quad \psi''(t) \leq 0 \quad\text{for } t \in (0,+\infty) \quad\text{with } \psi'' \not\equiv 0.
			\end{gather}
			
			If, in addition,~$f'>0$ in~$[\tau,+\infty)$, then also~$\psi'>0$ in~$[0,+\infty)$;
			
			\item\label{it:pm2} if~$\psi \in C^\infty([0,+\infty))$ is constructed as in point~(\ref{it:pm1}), the two-dimensional smooth model manifold~$(M,g)$, where~$g = dr^2+ \psi^2(r)\, d\theta^2$, satisfies~$\mathrm{Ric} \geq 0$,~$\mathrm{Ric} \not \equiv 0$, and
			\begin{align}
			\label{eq:vol-R}
				V(R) &= 2\pi \int_{0}^{R} \psi(s) \, ds \quad\text{for every } R>0, \\
			\notag
				V(R) &= 2\pi \left[f_1+f_0 R + \int_{t_0+1}^{R} f(s) \, ds\right] \quad\text{for every } R \geq t_0+1,
			\end{align}
			with~$f_1 \in \R$ satisfying~$\abs*{f_1} \leq \frac{t_0^2}{2}+t_0+1$.
		\end{enumerate}
	\end{proposition}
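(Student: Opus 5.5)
The plan is to build $\psi$ by gluing derivatives rather than the functions themselves, because concavity and $0\le\psi'\le 1$ are conditions on $\psi'$. First, by Lemma~\ref{lem:lemmino} applied to $f$ (positive and concave), we get $f'\ge 0$ on $[\tau,+\infty)$. Since $f''\le 0$, we also get $f'\le f'(\tau)<1$ there. Fix $t_0$ large, to be chosen below, and a nondecreasing cut-off $\chi\in C^\infty(\R)$ with $\chi=0$ on $(-\infty,t_0]$ and $\chi=1$ on $[t_0+1,+\infty)$. Define
\begin{equation*}
h(t)\coloneqq (1-\chi(t))+\chi(t)f'(t),\qquad \psi(t)\coloneqq\int_0^t h(s)\,ds ,
\end{equation*}
where $f'$ is only needed on $[t_0,+\infty)$, which lies inside $[\tau,+\infty)$ once $t_0>\tau$. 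Then $h\in C^{k-1}$, hence $\psi\in C^k$. Moreover $h=1$ on $[0,t_0]$, so $\psi(t)=t$ there, and $h=f'$ on $[t_0+1,+\infty)$. Since $h$ is a convex combination of $1$ and $f'\in[0,1)$, we have $0\le h\le 1$, and $h>0$ whenever $f'>0$. Finally,
\begin{equation*}
h'=\chi'(f'-1)+\chi f''\le 0,
\end{equation*}
so $\psi''\le 0$. Also $\psi''\not\equiv 0$, because $h$ drops from $1$ to $f'(t_0+1)<1$.

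For $t\ge t_0+1$ we get $\psi(t)=\psi(t_0+1)+f(t)-f(t_0+1)$, so \eqref{eq:expr-psi} holds with $f_0\coloneqq\psi(t_0+1)-f(t_0+1)$. The upper bound is immediate: $\psi(t_0+1)\le t_0+1$ and $f>0$ give $f_0<t_0+1$.

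The only delicate point is $f_0>0$, and this is where the choice of $t_0$ enters. Since $h\ge 0$, we have $\psi(t_0+1)\ge t_0$. By concavity of $f$,
\begin{equation*}
f(t_0+1)\le f(\tau)+f'(\tau)(t_0+1-\tau).
\end{equation*}
Hence $f_0>0$ as soon as
\begin{equation*}
t_0\bigl(1-f'(\tau)\bigr)>f(\tau)+f'(\tau)(1-\tau).
\end{equation*}
Because $1-f'(\tau)>0$, this holds for $t_0$ large. We therefore take $t_0$ larger than this threshold and also larger than $\max\{\tau,\frac{2}{1-f'(\tau)}\}$. This completes point~(\ref{it:pm1}).

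For point~(\ref{it:pm2}), note that $\psi(t)=t$ near $0$, so the metric $g=dr^2+\psi^2\,d\theta^2$ is smooth at the pole. Also $\psi>0$ on $(0,+\infty)$, since $\psi$ is concave, increasing on $[0,t_0]$ and $\psi'\ge 0$. The Gaussian curvature of the model is $K=-\psi''/\psi\ge 0$, and it is not identically zero, so $\mathrm{Ric}=K g\ge 0$ and $\mathrm{Ric}\not\equiv 0$.

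The volume formula \eqref{eq:vol-R} is the standard polar-coordinates computation. Splitting the integral at $t_0+1$ gives the second identity with
\begin{equation*}
f_1\coloneqq\int_0^{t_0+1}\psi\,ds-f_0(t_0+1).
\end{equation*}
To bound $f_1$, first use $\psi(s)\le s$ and $f_0>0$ to get $f_1\le \frac{(t_0+1)^2}{2}$. For the lower bound, use $f_0<\psi(t_0+1)$ and $\psi(t_0+1)-\psi(s)\le t_0+1-s$, which follows from $\psi'\le 1$. This gives
\begin{equation*}
f_1\ge-\int_0^{t_0+1}(t_0+1-s)\,ds=-\frac{(t_0+1)^2}{2}.
\end{equation*}
Since $\frac{(t_0+1)^2}{2}=\frac{t_0^2}{2}+t_0+\frac12$, we obtain $\abs*{f_1}\le\frac{t_0^2}{2}+t_0+1$.
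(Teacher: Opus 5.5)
Your proposal is correct and follows essentially the paper's construction: your $h=(1-\chi)+\chi f'$ is exactly the paper's $\sigma=1-\varphi\,(1-f')$, integrated from $0$ to give $\psi$. You also take the same $f_0=\psi(t_0+1)-f(t_0+1)$ and prove $f_0>0$ with the same two ingredients, $\psi(t_0+1)\ge t_0$ and concavity of $f$. The remaining differences are cosmetic: you use an explicit threshold for $t_0$ instead of the intermediate bound $f(t)\le\frac{1+f'(\tau)}{2}\,t$, you argue $\psi''\not\equiv 0$ directly rather than by contradiction, and $\psi'\le 1$ gives you the slightly sharper estimate $|f_1|\le\frac{(t_0+1)^2}{2}$.
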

	\begin{proof}
		We address point~\eqref{it:pm1}. We start by observing that Lemma~\ref{lem:lemmino} and~\eqref{eq:assf} yield
		\begin{equation}
		\label{eq:sign-f'}
			f'(t) \geq 0 \quad\text{for all } t \geq \tau.
		\end{equation}
		By concavity of~$f$, we have~$f(t) \leq f(\tau) + f'(\tau) \left(t-\tau\right)$ for every~$t \geq \tau$. Taking into account~\eqref{eq:sign-f'}, we have~$\tau f'(\tau) \geq 0$, which implies
		\begin{equation*}
			\frac{f(t)}{t} \leq \frac{f(\tau)}{t} + f'(\tau) \quad\text{for every } t \geq \tau.
		\end{equation*}
		Since~$f'(\tau) \in [0,1)$ by~\eqref{eq:assf} and~\eqref{eq:sign-f'}, there exists~$t_0 > \max\!\left\{\tau, \frac{2}{1-f'(\tau)}\right\}$ such that
		\begin{equation}
		\label{eq:est-t0}
			f(t) \leq \frac{1+f'(\tau)}{2} \, 
			t \quad\text{for every } t \geq t_0.
		\end{equation}
		
		Now let~$\varphi \in C^\infty([0,+\infty))$ satisfy~$0 \leq \varphi \leq 1$,~$\varphi' \geq 0$ in~$[0,+\infty)$,~$\varphi = 0$ in~$[0,t_0]$, and~$\varphi =1$ in~$[t_0+1,+\infty)$. Set
		\begin{equation*}
			\sigma(t) \coloneqq 1 - \varphi(t) \left(1-f'(t)\right) \quad\text{for } t \in [0,+\infty),
		\end{equation*}
		which is well defined since~$t_0 > \tau$ and~$\sigma \in C^{k-1}([0,+\infty))$. Moreover,~$\sigma = 1$ in~$[0,t_0]$ and~$\sigma = f'\geq 0$ in~$[t_0+1,+\infty)$ by~\eqref{eq:sign-f'}. We also observe that, since~$f$ is concave, we have~$0 \leq f'(t) \leq f'(\tau) < 1$ for all~$t \geq \tau$ by~\eqref{eq:assf} and~\eqref{eq:sign-f'}, therefore
		\begin{equation}
		\label{eq:f'}
			1-f'(t)>0 \quad\text{for all } t \geq \tau.
		\end{equation}
		From this,~\eqref{eq:assf}, the definition of~$\varphi$, and the fact that~$t_0>\tau$, we deduce that
		\begin{equation*}
			\sigma \geq 0 \quad\text{and}\quad \sigma' \leq 0 \quad\text{in } [0,+\infty).
		\end{equation*}
		We then define
		\begin{equation*}
			\psi(t) \coloneqq \int_{0}^{t} \sigma(s) \, ds,
		\end{equation*}
		which satisfies all the required properties. Indeed,~$\psi \in C^{k}([0,+\infty))$ with~$\psi'(t)=\sigma(t) \in [0,1]$ and~$\psi''(t)=\sigma' (t) \leq 0$ for $t >0$. From this, we see that~$\psi'(0)=\sigma(0)=1$, and~$\psi''(0)=\sigma'(0)=0$. Moreover, by definition of~$\sigma$ and~$\varphi$, we have~$\psi(t)=t$ for~$t \in [0,t_0]$ while, for~$t>t_0+1$,
		\begin{equation}
		\label{eq:psi-tlarge}
			\begin{split}
				\psi(t) &= t_0 + \int_{t_0}^{t_0+1} 1 - \varphi(s) \left(1-f'(s)\right) \, ds + \int_{t_0+1}^{t} f'(s) \, ds \\
				&= t_0 +1 - \int_{t_0}^{t_0+1} \varphi(s) \left(1-f'(s)\right) \, ds + f(t) - f(t_0+1) \\
				&= f(t) - I + t_0 +1 - f(t_0+1),
			\end{split}
		\end{equation}
		where
		\begin{equation*}
			I \coloneqq \int_{t_0}^{t_0+1} \varphi(s) \left(1-f'(s)\right) \, ds \in [0,1].
		\end{equation*}
		Indeed, recalling~\eqref{eq:f'}, we have~$0 \leq \varphi \left(1-f'\right) \leq \varphi \leq 1$ in~$(t_0,t_0+1)$. Hence, we set
		\begin{equation*}
			f_0 \coloneqq - I + t_0 +1 - f(t_0+1) < t_0 +1.
		\end{equation*}
		On the other hand, taking advantage of~\eqref{eq:assf} and~\eqref{eq:est-t0}, we obtain
		\begin{equation*}
			f_0 \geq -1 + t_0 +1 - \frac{1+f'(\tau)}{2} \left(t_0 +1 \right) = - 1 + \frac{1-f'(\tau)}{2} \left(t_0 +1 \right) > - 1 + \frac{1-f'(\tau)}{2} \, t_0 > 0,
		\end{equation*}
		where we used the fact that~$t_0 > \frac{2}{1-f'(\tau)}$ for the last inequality.
		
		We now prove that~$\psi'' \not\equiv 0$. Suppose, by contradiction that~$\psi'' = \sigma' \equiv 0$, then~$\sigma = \sigma(0)=1$. As a result~$\psi(t)=t$ and, from~\eqref{eq:psi-tlarge}, also~$f(t)=t-f_0$ for all~$t \geq t_0+1$. This implies~$f'(t)=1$ for all~$t \geq t_0+1$ and, since~$f$ is concave, that~$f'(\tau) \geq f'(t_0+1) = 1$, which contradicts~\eqref{eq:assf}.
		
		Finally, assume in addition that~$f'>0$. Then,~$\sigma = f'> 0$ in~$[t_0+1,+\infty)$. Since~$\sigma$ is non-increasing, this implies that~$\sigma > 0$ in~$[0,+\infty)$ which, in turn, yields~$\psi' > 0$ in~$[0,+\infty)$. This completes the proof of point~\eqref{it:pm1}. \newline
		
		We now address point~\eqref{it:pm2}. We notice that the Gaussian curvature is given by
		\begin{equation}
		\label{eq:gaussian}
			K_g = - \frac{\psi''(r)}{\psi(r)}.
		\end{equation}
		Since~$M$ is two-dimensional, this and~\eqref{eq:der-psi} yield~$\mathrm{Ric} \geq 0$ with~$\mathrm{Ric} \not \equiv 0$.
		
		Since~$\psi$ is non-negative, the volume form is~$d\Vg = \psi(r) \, dr \wedge d\theta$, which gives~\eqref{eq:vol-R}. Moreover, for~$R \geq t_0+1$, using~\eqref{eq:expr-psi} we have
		\begin{align*}
			V(R) &= 2\pi \left[\int_{0}^{t_0} s \, ds + \int_{t_0}^{t_0+1} \psi(s) \, ds + \int_{t_0+1}^{R} f(s) + f_0 \, ds\right] \\
			&= 2\pi \left[\frac{t_0^2}{2} + \int_{t_0}^{t_0+1} \psi(s) \, ds +f_0 \left(R- (t_0+1)\right) + \int_{t_0+1}^{R} f(s) \, ds\right] \\
			&= 2\pi \left[f_1 + f_0 R + \int_{t_0+1}^{R} f(s) \, ds\right] \!,
		\end{align*}
		where~$f_1 \coloneqq \frac{t_0^2}{2} + \int_{t_0}^{t_0+1} \psi(s) \, ds - f_0 \left(t_0+1\right)$. By~\eqref{eq:expr-psi}--\eqref{eq:der-psi} and since~$f_0 \in (0,t_0+1)$,
		\begin{equation*}
			t_0 = \psi(t_0) \leq \int_{t_0}^{t_0+1} \psi(s) \, ds \leq \psi(t_0+1) \leq t_0+1,
		\end{equation*}
		hence, we have~$f_1 \leq \frac{t_0^2}{2} + t_0+1$ and also
		\begin{equation*}
			f_1 \geq \frac{t_0^2}{2} + t_0 - \left(t_0+1\right)^2 = - \left(\frac{t_0^2}{2} + t_0+1\right) \!.
		\end{equation*}
	\end{proof}
	
		\begin{lemma}
		\label{lem:q-psi}
		The following assertions hold true.
		\begin{enumerate}[leftmargin=*]
			\item\label{it:l1} Let~$\psi \in C^0((0,+\infty))$ be such that~$\psi>0$ in~$(0,+\infty)$ and let~$t_0 > 0$.
			Then, we have
			\begin{equation*}
				\frac{t^2}{\int_{0}^{t} \psi(s) \, ds} \leq \left(1+\frac{t_0}{t-t_0}\right)^{\!\! 2} \int_{t_0}^{t} \frac{ds}{\psi(s)} \quad\text{for every } t>t_0.
			\end{equation*}
			
			\item\label{it:l2} Let~$\psi \in C^0([0,+\infty)) \cap C^1((0,+\infty))$ be a concave function such that 
			$\psi>0$ in $(0,+\infty)$ and let~$t_0 > 0$. Then, it follows that
			\begin{equation*}
				\int_{t_0}^{t} \frac{ds}{\psi(s)} \leq \frac{t^2}{\int_{0}^{t} \psi(s) \, ds} \left(\log t - \log t_0\right)  \quad\text{for every } t>t_0.
			\end{equation*}
			
			\item\label{it:l3} Let $(M,g)$ be a two-dimensional smooth model manifold~\footnote{See the beginning of Section~\ref{sec:radsol} for the definition and properties of such a Riemannian manifold.}, with~$g = dr^2+ \psi^2(r)\, d\theta^2$, and let~$t_0 > 0$. Then, it follows that
			\begin{equation*}
				q(t) \leq \frac{1}{2 \pi} \left(1+\frac{t_0}{t-t_0}\right)^{\!\! 2} \int_{t_0}^{t} \frac{ds}{\psi(s)} \quad\text{for every } t>t_0
			\end{equation*}
			and
			\begin{equation*}
				\int_{t_0}^{t} \frac{ds}{\psi(s)} \leq 2 \pi q(t) \left(\log t - \log t_0\right)  \quad\text{for every } t>t_0,
			\end{equation*}
			where~$q$ is defined in~\eqref{eq:defq}.
		\end{enumerate}
	\end{lemma}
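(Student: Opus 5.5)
For point~(\ref{it:l1}), I will use Cauchy--Schwarz on $[t_0,t]$:
\begin{equation*}
(t-t_0)^2 = \left(\int_{t_0}^{t} \psi^{1/2}\psi^{-1/2}\, ds\right)^{\!2} \leq \int_{t_0}^{t}\psi(s)\,ds \int_{t_0}^{t}\frac{ds}{\psi(s)} \leq \int_{0}^{t}\psi(s)\,ds \int_{t_0}^{t}\frac{ds}{\psi(s)},
\end{equation*}
where the last step uses $\psi>0$. Dividing by $\int_0^t\psi$ and multiplying by $t^2/(t-t_0)^2=\bigl(1+\frac{t_0}{t-t_0}\bigr)^2$ gives the claim. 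No difficulty is expected here.

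For point~(\ref{it:l2}), the idea is that concavity together with $\psi(0)\ge 0$ (continuity and positivity on $(0,+\infty)$) makes $s\mapsto \psi(s)/s$ non-increasing on $(0,+\infty)$. Indeed, $\psi(\lambda s)\ge \lambda\psi(s)+(1-\lambda)\psi(0)\ge\lambda\psi(s)$ for $\lambda\in(0,1]$. Hence for $0<s\le t$ we get $\psi(s)\ge \frac{s}{t}\psi(t)$, which gives $\int_0^t\psi\ge \frac{t}{2}\psi(t)$. We also get the reverse comparison: for $s\le t$, $\psi(s)/s\ge \psi(t)/t$, so $\frac{1}{\psi(s)}\le \frac{t}{s\,\psi(t)}$. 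Integrating from $t_0$ to $t$ yields $\int_{t_0}^t\frac{ds}{\psi}\le \frac{t}{\psi(t)}(\log t-\log t_0)$.

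It then remains to show $\frac{t}{\psi(t)}\le \frac{t^2}{\int_0^t\psi}$, i.e. $\int_0^t\psi\le t\,\psi(t)$. This holds as soon as $\psi$ is non-decreasing. That monotonicity follows from concavity and positivity on an unbounded interval, exactly as in the first part of Lemma~\ref{lem:lemmino} (a negative derivative somewhere would force $\psi$ to become negative). The main subtlety is precisely this pairing of the two inequalities: $\psi(s)/s$ decreasing bounds $1/\psi(s)$, while $\psi$ increasing bounds the volume integral. I will apply Lemma~\ref{lem:lemmino} on $[\tau,+\infty)$ for each small $\tau>0$, using $C^1$ on $(0,+\infty)$.

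For point~(\ref{it:l3}), I will use $V(t)=2\pi\int_0^t\psi(s)\,ds$ from~\eqref{eq:vol-R}, valid for any model manifold since $\psi>0$ on $(0,+\infty)$. Thus $q(t)=\frac{1}{2\pi}\frac{t^2}{\int_0^t\psi}$, and the first inequality is point~(\ref{it:l1}). For the second inequality I need $\psi$ concave. In the intended setting $\mathrm{Ric}\ge0$, and~\eqref{eq:gaussian} gives $\psi''\le 0$, so point~(\ref{it:l2}) applies with $\psi(0)=0$. If the footnote's definition of model manifold in Section~\ref{sec:radsol} already includes $\mathrm{Ric}\ge0$, I will invoke it at this step; otherwise that hypothesis must be added.
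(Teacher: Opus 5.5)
Your proof is correct and follows the paper's argument. For (1) you use the same Cauchy--Schwarz bound on $[t_0,t]$. For (2) you use the same two ingredients ($s\mapsto\psi(s)/s$ non-increasing, and $\psi$ non-decreasing via Lemma~\ref{lem:lemmino}); your direct bound $\int_0^t\psi\le t\,\psi(t)$ is simply a shortcut for the paper's Jensen step $\int_{t_0}^t\psi\le(t-t_0)\psi(t)$ combined with the ratio estimate $\int_0^t\psi\big/\int_{t_0}^t\psi\le t/(t-t_0)$.

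Your caveat in (3) is also well founded. The paper gets concavity only by appealing to Proposition~\ref{prop:varmod}, and without $\psi''\le 0$ the second inequality genuinely fails: for $\psi=\sinh$ the left side tends to $\log\coth(t_0/2)>0$, while $2\pi q(t)\log(t/t_0)\to 0$.
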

	\begin{proof}
		For point~\eqref{it:l1}, let~$t>t_0>0$. An application of H\"older inequality yields
		\begin{equation*}
			(t-t_0)^2 = \left(\int_{t_0}^{t} \sqrt{\psi(s)} \, \frac{1}{\sqrt{\psi(s)}} \, ds\right)^{\!\! 2} \leq \int_{t_0}^{t} \psi(s) \, ds \int_{t_0}^{t} \frac{ds}{\psi(s)},
		\end{equation*}
		from which
		\begin{equation*}
			\frac{t^2}{\int_{0}^{t} \psi(s) \, ds} \leq \frac{t^2}{ \int_{t_0}^{t} \psi(s) \, ds} = \frac{t^2}{\left(t-t_0\right)^2} \frac{ \left(t-t_0\right)^2}{\int_{t_0}^{t} \psi(s) \, ds} \leq \left(1+\frac{t_0}{t-t_0}\right)^{\!\! 2} \int_{t_0}^{t} \frac{ds}{\psi(s)}.
		\end{equation*}
		This establishes point~\eqref{it:l1} of the lemma. \newline
		
		We then consider point~\eqref{it:l2}. We first observe that, since~$\psi$ is concave with~$\psi(0) \geq 0$
		\begin{equation}
			\label{eq:noninc}
			\text{the map } t \mapsto \frac{\psi(t)}{t} \text{ in non-increasing in } (0,+\infty)
		\end{equation}
		and, for every~$t>t_0>0$, Jensen's inequality gives
		\begin{equation*}
			\frac{1}{t-t_0} \int_{t_0}^{t} \psi(s) \, ds \leq \psi \!\left(\frac{1}{t-t_0} \int_{t_0}^{t} s \, ds\right) = \psi \!\left(\frac{t+t_0}{2}\right) \leq \psi(t),
		\end{equation*}
		where we used that fact that~$\psi$ is non-decreasing by Lemma~\ref{lem:lemmino} applied with any~$\tau >0$. Hence, the latter estimate implies
		\begin{equation}
			\label{eq:jen}
			\int_{t_0}^{t} \psi(s) \, ds \leq \left(t-t_0\right) \psi(t).
		\end{equation}
		Let~$t>t_0>0$. Using~\eqref{eq:noninc}, we obtain
		\begin{equation*}
			\int_{t_0}^{t} \frac{ds}{\psi(s)} = \int_{t_0}^{t} \frac{s}{\psi(s)} \frac{1}{s} \, ds \leq \frac{t}{\psi(t)} \int_{t_0}^{t} \frac{ds}{s} = \frac{t}{\psi(t)} \left(\log t - \log t_0\right) \!,
		\end{equation*}
		which, coupled with~\eqref{eq:jen}, entails
		\begin{equation}
			\label{eq:1/psi}
				\int_{t_0}^{t} \frac{ds}{\psi(s)} \leq \frac{t \left(\log t - \log t_0\right) \left(t-t_0\right)}{\int_{t_0}^{t} \psi(s) \, ds} = \frac{t^2}{\int_{0}^{t} \psi(s) \, ds} \frac{\int_{0}^{t} \psi(s) \, ds}{\int_{t_0}^{t} \psi(s) \, ds} \left(\frac{t-t_0}{t}\right) \left(\log t - \log t_0\right) \!.
		\end{equation}
		We now estimate the integrals on the right-hand side of~\eqref{eq:1/psi}. To this end, since~$\psi$ is non-decreasing, we observe that~$\psi(s) \leq \psi(t_0)$ for~$0<s<t_0$ and~$\psi(s) \geq \psi(t_0)$ for~$s>t_0$, hence~$\int_{0}^{t_0} \psi(s) \, ds \leq t_0 \psi(t_0)$ and~$\int_{t_0}^{t} \psi(s) \, ds \geq \left(t - t_0\right) \psi(t_0)$. As a result, we have
		\begin{equation*}
			\frac{\int_{0}^{t} \psi(s) \, ds}{\int_{t_0}^{t} \psi(s) \, ds} = 1 + \frac{\int_{0}^{t_0} \psi(s) \, ds}{\int_{t_0}^{t} \psi(s) \, ds} \leq 1 + \frac{t_0}{t-t_0} = \frac{t}{t-t_0}.
		\end{equation*}
		Combining the latter with~\eqref{eq:1/psi} yields the desired conclusion. \newline
		
		Finally, point~\eqref{it:l3} follows from Proposition~\ref{prop:varmod}, the definition of~$q$ in~\eqref{eq:defq}, and points~\eqref{it:l1}--\eqref{it:l2} of the lemma.
	\end{proof}
	
	\begin{remark}
		We observe that the logarithmic dependence in point~\eqref{it:l2} of Lemma~\ref{lem:q-psi} is optimal, as one can verify by taking~$\psi(t)=at+f_0$, with~$a \in (0,1)$ and~$f_0>0$. Moreover, a two-dimensional smooth model manifold corresponding to such a~$\psi$ exists by Proposition~\ref{prop:varmod}, as follows by choosing~$f(t)=at$.
	\end{remark}
	
	% -------------------------------------------------------------------
	% RADIAL SOL
	
	\section{Radial solutions on two dimensional smooth model manifolds}
	\label{sec:radsol}
	Let~$(M,g)$ be a two-dimensional smooth model manifold, namely a Riemannian manifold possessing a \textit{pole}~$\mathcal{o} \in M$ and whose metric, in polar coordinates~$(r,\theta)$ around~$\mathcal{o}$, has the expression
	\begin{equation*}
		g=dr^2+\psi^2(r) \, d\theta^2 \quad \text{on } M\setminus\left\{\mathcal{o}\right\} \!,
	\end{equation*}
	where~$d\theta^2$ denotes the usual metric on~$\sfera^{1}$ and the function~$\psi\in C^\infty\!\left(\left[0,+\infty\right)\right)$ satisfies, for any non-negative integer~$j \in \N$,
	\begin{equation}
	\label{psi-nec2}
		\psi^{(2j)}(0)=0,\quad\psi'(0)=1,\quad\text{and}\quad\psi'(r)>0\quad\text{for all } r>0.
	\end{equation}
	
	Then,~$u=u(r)$ is a smooth radial solution of~\eqref{eq:Liou} on the model manifold~$M$ if and only if it satisfies the initial value problem
	\begin{equation}
		\label{eq:ivp-u}
		\begin{cases}
			\begin{aligned}
				&-u''(r) - \frac{\psi'(r)}{\psi(r)} \, u'(r)= e^{u(r)}	&& \text{for } r>0, \\
				&u(0) = \alpha \in \R, \\
				&u'(0) = 0.
			\end{aligned}
		\end{cases}
	\end{equation}
	
	If~$\psi'>0$ in~$(0,+\infty)$, then Proposition~2.1 in~\cite{berchio} guarantees the existence of a smooth radial solution to~\eqref{eq:Liou} on~$(M,g)$ for every~$\alpha \in \R$. Moreover, the maps~$r \mapsto u'(r)$ and~$r \mapsto e^{u(r)}$ are bounded on~$[0,+\infty)$ with~$u'(r)<0$ for~$r>0$.
	
	We observe that Proposition~2.1 in~\cite{berchio} remains valid under the weaker assumption~$\psi' \geq 0$ in~$(0,+\infty)$. Indeed, the existence of a local solution to~\eqref{eq:ivp-u} in~$[0,\bar{r}]$, for some~$\bar{r}>0$, follows from the condition~$\psi'(0)=1$, which is required for the metric~$g$ to be well defined at the origin. The remainder of the proof in~\cite{berchio} then carries over under the assumption~$\psi' \geq 0$ in~$(0,+\infty)$.
	
	In the subsequent result, we consider the case in which~$\psi$ has at most linear growth at infinity with~$\psi' \geq 0$, and study the properties of radial solutions to~\eqref{eq:Liou}.
	
	\begin{proposition}
	\label{prop:ling}
		Let~$(M,g)$ be a two-dimensional smooth model manifold, where~$g = dr^2+ \psi^2(r)\, d\theta^2$, and~$\psi'(r)\geq 0$ for all~$r>0$. Assume, in addition, that there exist~$c_0>0$ and~$r_0>0$ such that
		\begin{equation}
		\label{eq:lg}
			\psi(r) \leq c_0 r \quad\text{for every } r \geq r_0.
		\end{equation}
		Then, it follows that:
		\begin{enumerate}[leftmargin=*]
			\item\label{it:lg1} setting~$I_\infty \coloneqq \int_{0}^{+\infty} \psi(s) e^{u(s)} \, ds$, we have~$I_\infty < +\infty$ and
			\begin{gather}
			\label{eq:lim1}
				\psi(r) u'(r) \to - I_\infty \in (-\infty,0) \quad\text{as } r \to +\infty, \\
			\label{eq:L1}
				e^u \in L^1(M) \quad\text{with } \int_{M} e^u \, d\Vg = 2\pi I_\infty;
			\end{gather}
			
			\item\label{it:lg2} for every~$\bar{r}>0$, we have
			\begin{equation*}
				u(r) \sim - I_\infty \int_{\bar{r}}^{r} \frac{ds}{\psi(s)} \quad\text{as } r \to +\infty.
			\end{equation*}
		\end{enumerate}
	\end{proposition}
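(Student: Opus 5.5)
The ODE in \eqref{eq:ivp-u} can be written in divergence form, $-(\psi u')' = \psi e^{u}$, and I will work with it in that form. Integrating from $0$ to $r$ and using $\psi(0)=0$ together with the boundedness of $u'$ near $0$ gives
\begin{equation*}
	\psi(r)\,u'(r) = -\int_0^r \psi(s) e^{u(s)}\,ds \eqqcolon -I(r).
\end{equation*}
The function $I$ is increasing and positive for $r>0$, so $u'<0$ and $u$ is decreasing. The statement then reduces to two tasks: show that $I_\infty=\lim_{r\to+\infty} I(r)$ is finite, and then integrate the identity above once more.

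\textbf{Finiteness of $I_\infty$ (the main obstacle).} The idea is to use the linear growth bound \eqref{eq:lg} to show that $u$ eventually decays faster than $-2\log r$, which makes $\psi e^{u}\lesssim r e^{u}$ integrable.

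Fix $r_1\ge r_0$ with $I(r_1)=:\delta>0$. For $r\ge r_1$ we have $u'(r)\le -\delta/\psi(r)\le -\delta/(c_0 r)$, so $u(r)\le u(r_1)-(\delta/c_0)\log(r/r_1)$. By itself this only gives the power $-\delta/c_0$, which may be too weak. To bootstrap, suppose by contradiction that $I(r)\to+\infty$. Then for every $A>0$ there is $r_A$ with $I(r)\ge A$ for $r\ge r_A$, and hence
\begin{equation*}
	u(r)\le u(r_A)-\frac{A}{c_0}\log\frac{r}{r_A}\quad\text{for } r\ge r_A.
\end{equation*}
Choosing $A>3c_0$ gives $\psi e^{u}\le c_0 r\cdot C r^{-3}$, which is integrable. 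So $I_\infty<\infty$, a contradiction. Hence $I_\infty\in(0,\infty)$, which proves \eqref{eq:lim1}.

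For \eqref{eq:L1}, the volume form is $d\Vg=\psi\,dr\,d\theta$, so integrating the radial function $e^u$ gives $\int_M e^u\,d\Vg=2\pi I_\infty$.

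\textbf{Asymptotics in point~\eqref{it:lg2}.} Integrating $u'=-I/\psi$ from $\bar r$ to $r$ gives
\begin{equation*}
	u(r)=u(\bar r)-\int_{\bar r}^r \frac{I(s)}{\psi(s)}\,ds .
\end{equation*}
Since $\psi$ is non-decreasing and satisfies \eqref{eq:lg}, we have $\int_{\bar r}^{\infty} ds/\psi(s)\ge \int^{\infty} ds/(c_0 s)=+\infty$. Because $I(s)\to I_\infty>0$, a Cesàro/L'Hôpital argument (splitting the integral at a large $R_\varepsilon$ beyond which $|I-I_\infty|<\varepsilon$) yields
\begin{equation*}
	\int_{\bar r}^r \frac{I}{\psi}\sim I_\infty\int_{\bar r}^r \frac{1}{\psi}.
\end{equation*}
The divergence of this integral also absorbs the constant $u(\bar r)$, so $u(r)\sim -I_\infty\int_{\bar r}^r ds/\psi(s)$. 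The hypothesis $\psi'\ge0$ is used only to guarantee global existence of the solution via the extension of Proposition~2.1 in~\cite{berchio} discussed above, and to ensure $\psi>0$ for $r>0$.
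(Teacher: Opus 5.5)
Your proposal is correct and follows essentially the same route as the paper. Both use the identity $\psi u' = -\int_0^r \psi e^{u}\,ds$, then argue by contradiction that $I_\infty=+\infty$ together with \eqref{eq:lg} forces $e^{u}$ to decay like a large negative power of $r$ (the paper uses the threshold $10c_0$ where you use $3c_0$), and then obtain point~\eqref{it:lg2} from \eqref{eq:lim1} and the divergence of $\int^{\infty} ds/\psi$; your L'H\^opital-type splitting spells out the last step, which the paper leaves implicit.
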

	\begin{proof}
		For point~\eqref{it:lg1}, we argue by contradiction. Suppose that~$I_\infty = +\infty$. Then, there exists~$r_1>r_0+1$ such that
		\begin{equation}
		\label{eq:ass-hyp}
			\int_{0}^{r} \psi(s) e^{u(s)} \, ds \geq 10 c_0 \quad\text{for every } r \geq r_1.
		\end{equation}
		We first observe that the equation in~\eqref{eq:ivp-u} can be rewritten as
		\begin{equation*}
			\left(\psi(r) u'(r)\right)' = - \psi(r) e^{u(r)} \quad\text{for } r>0,
		\end{equation*}
		therefore, integrating over~$[0,r]$ and using the fact that~$\psi(0)=0$, we obtain
		\begin{equation}
		\label{eq:eq-int}
			\psi(r) u'(r) = - \int_{0}^{r} \psi(s) e^{u(s)} \, ds \quad\text{for every } r>0.
		\end{equation}
		Combining~\eqref{eq:ass-hyp}--\eqref{eq:eq-int} we deduce
		\begin{equation*}
			\psi(r) u'(r) \leq - 10 c_0 \quad\text{for every } r \geq r_1,
		\end{equation*}
		and~\eqref{eq:lg} finally entails
		\begin{equation*}
			u'(r) \leq - \frac{10}{r} \quad\text{for every } r \geq r_1.
		\end{equation*}
		Integrating this inequality and exponentiating the resulting estimate, we obtain
		\begin{equation*}
			e^{u(r)} \leq \mathsf{c} r^{-10} \quad\text{for every } r \geq r_1,
		\end{equation*}
		where~$\mathsf{c}>0$ depends on~$r_1>0$. As a consequence, combining this estimate with~\eqref{eq:lg}, we get
		\begin{equation*}
			\psi(r) e^{u(r)} \leq \mathsf{c} c_0 r^{-9} \quad\text{for every } r \geq r_1.
		\end{equation*}
		This immediately leads to a contradiction of the fact that~$I_\infty = +\infty$.
		
		Hence, we conclude that~$I_\infty < +\infty$. Moreover,~\eqref{eq:eq-int} immediately yields~\eqref{eq:lim1}. To prove~\eqref{eq:L1}, we simply observe that
		\begin{equation*}
			\int_{M} e^u \, d\Vg = 2 \pi \int_{0}^{+\infty} \psi(s) e^{u(s)} \, ds = 2\pi I_\infty.
		\end{equation*}
		This completes the proof of point~\eqref{it:lg1}. \newline
		
		Finally, point~\eqref{it:lg2} follows from~\eqref{eq:lim1}, together with fact that, by~\eqref{eq:lg}, we have
		\begin{equation*}
			\int_{\bar{r}}^{+\infty} \frac{ds}{\psi(s)} = +\infty.
		\end{equation*}
	\end{proof}
	
	\begin{remark}
		For the smooth model manifolds constructed in Proposition~\ref{prop:varmod}, thanks to~\eqref{eq:expr-psi}--\eqref{eq:der-psi}, we have~$\psi(t) \leq t$ and~$\psi' \geq 0$. Thus, Proposition~\ref{prop:ling} applies in this case.
	\end{remark}
	
	We conclude this section with the proof of Corollary~\ref{cor:c3}.
	
	\begin{proof}[Proof of Corollary~\ref{cor:c3}]
		Since~$\mathrm{Ric} \geq 0$,~$\mathrm{Ric} \not \equiv 0$, and~$M$ is two-dimensional, we deduce from~\eqref{eq:gaussian} that~$\psi'' \leq 0$ in~$(0,+\infty)$ with~$\psi'' \not\equiv 0$. Therefore, from the properties of~$\psi$ in~\eqref{psi-nec2} and Lemma~\ref{lem:lemmino}, we have
		\begin{equation*}
			\psi'(t) \geq 0 \quad\text{and}\quad \psi(t) \leq t \quad\text{for } t \in (0,+\infty).
		\end{equation*}
		Hence, the assumptions of Proposition~\ref{prop:ling} are satisfied. Thus, from point~\eqref{it:lg2} of Proposition~\ref{prop:ling} with~$\bar{r}=1$, we have
		\begin{equation*}
			u(r) \sim - I_\infty \int_{1}^{r} \frac{ds}{\psi(s)} \quad\text{as } r \to +\infty.
		\end{equation*}
		From this, exploiting point~\eqref{it:l2} of Lemma~\ref{lem:q-psi} with~$t_0=1$, it follows that
		\begin{equation*}
			u(r) \geq - \gamma q(r) \log r \quad\text{for every } r \geq r_0,
		\end{equation*}
		for some~$\gamma >0$ and~$r_0 \geq 1$. As a consequence, we have
		\begin{equation*}
			u(r) \geq - 4 \log r - \gamma q(r) \log r \quad\text{for every } r \geq r_0,
		\end{equation*}
		which implies that~$\beta=1$ in Theorem~\ref{th:main} is not allowed.
	\end{proof}
	
	% -------------------------------------------------------------------
	% EXAMPLES
	
	\section{Optimality of Corollary~\ref{cor:c2}}
	\label{sec:ex}
	
	In this brief section, we illustrate the optimality of Corollary~\ref{cor:c2} by presenting some explicit examples. We begin with the case $\varepsilon=0$ in Corollary~\ref{cor:c2}.
	
	\begin{example}
		Consider the function
		\begin{equation*}
			u(t) = -2 \log\!\left( \cosh\left(\frac{\sqrt{2}}{2}\vert t \vert\right)\right) \!.
		\end{equation*}
		Then,~$u$ is a solution to
		\begin{equation*}
			- u'' = e^u \quad\text{in } \R
		\end{equation*}
		which satisfies~$u(\vert t \vert) \sim -\sqrt{2} \,\abs*{t}$, as~$\abs*{t} \to +\infty$.
		
		Therefore,~$u \colon \R \times \sfera^1 \to \R$ provides a solution to~\eqref{eq:Liou} on the flat cylinder~$\R \times \sfera^1$ such that~$u(x) \sim -\sqrt{2}  r(x)$, as~$r(x) \to +\infty$, which has also finite mass on the flat cylinder. This shows that~$\beta=1$ is not allowed in Corollary~\ref{cor:c2}.
	\end{example}
	
	We now address the general case.
	
	\begin{example}
		For~$f(r)=r^\varepsilon$ and~$\varepsilon \in [0,1)$, let us consider the smooth function~$\psi$ and the model manifold~$(M,g)$ provided by Proposition~\ref{prop:varmod}. Then point~\eqref{it:lg2} of Proposition~\ref{prop:ling} yields
		\begin{equation}
			\label{eq:asym-u}
			u(r) \sim - I_\infty \int_{t_0+1}^{r} \frac{ds}{\psi(s)} \sim - \gamma_1 r^{1-\varepsilon} \quad\text{as } r \to +\infty,
		\end{equation}
		for some~$\gamma_1>0$. From point~\eqref{it:pm2} of Proposition~\ref{prop:varmod}, we deduce that
		\begin{equation}
			\label{eq:vol}
			V(r) \sim \gamma_2 r^{1+\varepsilon} \quad\text{as } r \to +\infty,
		\end{equation}
		whence~$\gamma_2 q(r) \sim r^{1-\varepsilon}$ as~$r \to +\infty$, with~$\gamma_2>0$. Combining this estimate with~\eqref{eq:asym-u}, we infer that
		\begin{equation*}
			u(r) \sim - \gamma_1 \gamma_2 \, q(r) \quad\text{as } r \to +\infty.
		\end{equation*}
		Thus, we set~$F(r) \coloneqq \gamma_1 \gamma_2$ and deduce, using also~\eqref{eq:vol}, that
		\begin{equation*}
			\frac{V(r)}{r^{1+\varepsilon} F^\varepsilon(r)} \sim \frac{\gamma_2 r^{1+\varepsilon}}{r^{1+\varepsilon} \left(\gamma_1 \gamma_2\right)^\varepsilon} = \frac{\gamma_2^{1-\varepsilon}}{ \gamma_1^\varepsilon} \in (0,+\infty) \quad\text{as } r \to +\infty.
		\end{equation*}
		This shows that the case~$\beta=1$ cannot be admitted in Corollary~\ref{cor:c2}.
	\end{example}
	
	% -------------------------------------------------------------------
	% ACKNOWLEDG ‘’ “”
	
	\section*{Acknowledgments}
	
	\noindent G.C.\ and M.G.\ are members of the “Gruppo Nazionale per l'Analisi Matematica, la Probabilità e le loro Applicazioni” (GNAMPA) of the “Istituto Nazionale di Alta Matematica” (INdAM, Italy). G.C.\ and M.G.\ have been partially supported by the “INdAM - GNAMPA Project”, CUP E53C25002010001.
	
	%-------------------------------------------------------------------
	% BIBLIO

\end{document}